\newtheorem{theorem}{Theorem}[section]
\newtheorem{lemma}[theorem]{Lemma}
\theoremstyle{definition}
\newtheorem{definition}[theorem]{Definition}
\newtheorem{proposition}[theorem]{Proposition}
\newtheorem{claim}[theorem]{Claim}
\theoremstyle{remark}
\numberwithin{equation}{section}
\begin{document}

\title{Stable equivalence of bridge positions of a handlebody-knot}


\author{Makoto Ozawa}
\address{Department of Natural Sciences, Faculty of Arts and Sciences, Komazawa University, 1-23-1 Komazawa, Setagaya-ku, Tokyo, 154-8525, Japan}
\curraddr{}
\email{w3c@komazawa-u.ac.jp}
\thanks{The author is partially supported by Grant-in-Aid for Scientific Research (C) (No. 17K05262) and (B) (No. 16H03928), The Ministry of Education, Culture, Sports, Science and Technology, Japan}


\subjclass[2010]{Primary 57M25}

\date{}

\begin{abstract}
We show that any two bridge positions of a handlebody-knot are stably equivalent.
\end{abstract}

\maketitle


\section{Introduction}

Reidemeister (\cite{Re}) and Singer (\cite{Si}) independently proved the stable equivalence theorem, that is, for any two Heegaard splittings $H_1$ and $H_2$ of a 3-manifold $M$, there exists a third Heegaard splitting $H$ which is a stabilization of both.
There are another proofs in \cite{C}, \cite{Sie}.
A modern, simple proof of the Reidemeister--Singer Theorem is given by Lei (\cite{L}).
Those proofs are worked in the piecewise linear category.
On the other hand, in the smooth category, Johnson (\cite{J}) gives a proof of Reidemeister--Singer Theorem using Rubinstein and Scharlemann's graphic, and Laudenbach (\cite{La}) gives a proof of Reidemeister--Singer Theorem by Cerf's methods.

In the context of knot theory, bridge positions (or bridge decompositions) of a knot correspond to Heegaard splittings of a 3-manifold.
Birman (\cite{B}) proved that the stable equivalence of plat representations, namely, bridge positions.
Hayashi (\cite{H}) proved that the stable equivalence theorem for a pair of (3-manifold, 1-submanifold), which generalizes both of Reidemeister--Singer Theorem and Birman's theorem.
Along the proof by Lei (\cite{L}), Zupan (\cite{Z}) gives a short proof of Hayashi's theorem.

Ishii (\cite{I}) opened a road to handlebody-knots, that is, handlebodies embedded in the 3-sphere.
He classified the moves on diagrams of trivalent graphs embedded in the 3-sphere, which define the same handlebody-knot ({\cite[Corollary 2]{I}}).
See Theorem \ref{R} for the details.
In this paper, we will prove the stable equivalence of bridge positions of a handlebody-knot (Theorem \ref{stable1}), and equivalently, the stable equivalence of bridge positions of a trivalent graph (Theorem \ref{stable2}).
To prove Theorem \ref{stable1} and \ref{stable1}, we introduce a regular bridge position of a knotted trivalent graph.
We will show that any bridge position can be isotoped by a horizontal isotopy so that it is regular (Lemma \ref{compatible}).
Then we can handle regular diagrams and bridge positions simultaneously, and by virtue of Reidemeister moves for trivalent graphs (\cite[Corollary 2]{I}), we will show that stable equivalence of regualr bridge positions (Theorem \ref{stable3}).

In the proof of the stable equivalence of bridge positions of a trivalent graph, the second stabilization move $S2$ (Figure \ref{S123}) is the key to a solution.
As far as the author knows, the stabilization $S2$ was first appeared in \cite{D} as moves 3, 4, 5, 6.
Dancso probably has known all moves which relate all Morse positions of a trivalent graph embedded in the 3-sphere, but the proof was not included in the paper \cite{D}.
Ishihara--Ishii (\cite{II}) defined the sliced diagram of trivalent tangles, that is, a diagram in Morse position, and showed that two sliced diagrams are related by an isotopy of the plane if and only if they are related by a finite sequence of moves $Pi$, $Pii$, $Piii$, $Piv\ (=S2)$ ({\cite[Theorem 5.1 (1)]{II}}).
This refines Dancso's unproved theorem in the context of diagrams in Morse position.
Dancso's unproved theorem or Ishihara--Ishii's theorem overlaps our main result Theorem \ref{stable2}, however it does not imply Theorem \ref{stable2} since for given two bridge positions of two trivalent graphs, we will construct two sequences of stabilizations and moves from two bridge positions to a third bridge position.
Thus it is irreversible.
By Theorem \ref{stable2}, we also give a proof of Dancso's unproved theorem in Proposition \ref{related}.

\setcounter{tocdepth}{2}
\tableofcontents

\section{Definitions}

\subsection{Height function and projection}

Recall that $S^3=\{(x,y,z,w)\in \Bbb{R}^4 \mid x^2+y^2+z^2+w^2=1 \}$ is decomposed by $S^2=\{(x,y,z,0)\in \Bbb{R}^4 \mid x^2+y^2+z^2=1 \}$ into two 3-balls $B_+=\{(x,y,z,w)\in \Bbb{R}^4 \mid x^2+y^2+z^2+w^2=1, w\ge0 \}$ and $B_-=\{(x,y,z,w)\in \Bbb{R}^4 \mid x^2+y^2+z^2+w^2=1, w\le0 \}$.
We call two points $(0,0,0,1)$ and $(0,0,0,-1)$ the {\em north pole} and {\em south pole} of $S^3$, and denote by $+\infty$ and $-\infty$ respectively.

Let $h:\Bbb{R}^4\to \Bbb{R}$ be a height function defined by $(x,y,z,w)\mapsto(0,0,0,w)$.
We denote the restriction $h|_{S^3}$ by the same symbol $h$.

Let $r:\Bbb{R}^4-(\mbox{w-axis})\to S^2\times \Bbb{R}$ be a retraction defined by 
$$\displaystyle (x,y,z,w)\mapsto \Big(\frac{x}{\sqrt{x^2+y^2+z^2}}, \frac{y}{\sqrt{x^2+y^2+z^2}}, \frac{z}{\sqrt{x^2+y^2+z^2}}, w\Big).$$
Then the restriction $r|_{S^3-\{\pm\infty\}}$ defines a map $S^3-\{\pm\infty\}\to S^2\times(-1,1)$ and denote it by the same symbol $r$.
Let $\pi:S^2\times (-1,1)\to S^2$ be a projection defined by $(x,y,z,w)\mapsto (x,y,z,0)$.
Then the composition $\pi\circ r$ defines a projection $S^3-\{\pm\infty\}\to S^2$ and denote it by $p$.

\subsection{Essential saddles, $\lambda$-vertices and $Y$-vertices}


Let $F$ be a closed surface embedded in $S^3$ and suppose that $h|_F$ is a Morse function and all critical
points have distinct critical values.
Let $x$ be a saddle point of $F$ which corresponds to the critical value $t_x\in \Bbb{R}$.
Let $P_x$ be a pair of pants component of $F\cap h^{-1}([t_x-\epsilon, t_x+\epsilon])$ containing $x$ for a sufficiently small positive real number $\epsilon$.
Let $C_x^1$, $C_x^2$ and $C_x^3$ be the boundary components of $P_x$, where we assume that $C_x^1$ and $C_x^2$ are contained in the same level $h^{-1}(t_x\pm\epsilon)$, and $C_x^3$ is contained in the another level $h^{-1}(t_x\mp\epsilon)$.
A saddle point $x$ of $F$ is {\em upper} (resp. {\em lower}) if $C_x^1$ and $C_x^2$ are contained in $h^{-1}(t_x-\epsilon)$ (resp. $h^{-1}(t_x+\epsilon)$).
A saddle point $x$ of $F$ is {\em essential} if both of the two loops $C_x^1$ and $C_x^2$ are essential in $F$, and it is {\em inessential} if it is not essential.



Let $\Gamma$ be a trivalent graph in $S^3$.
A vertex $v$ of $\Gamma$ is called a {\em $\lambda$-vertex} (resp. {\em $Y$-vertex}) with respect to the height function $h$ if two ends of incident edges lie below $v$ (resp. above $v$).

\subsection{Morse positions for handlebody-knots}

Let $v, v'\subset S^3$ be two handlebodies embedded in the 3-sphere.
We say that $v, v'$ are {\em equivalent} if there exists an ambient isotopy $\{f_t\}_{t\in [0,1]}$ of $S^3$ taking $v$ to $v'$.

Let $V$ be a handlebody-knot type (i.e. an equivalence class of handlebody-knots).
We always assume that $v\in V$ does not contain the north and south poles $\pm\infty=(0,0,0,\pm1)$ of $S^3$.

\begin{definition}
We say that $v\in V$ is a {\em Morse position} if
\begin{enumerate}
\item $h|_{\partial v}$ is a Morse function.
\item All critical points of $h|_{\partial v}$ have distinct values.
\item For any regular value $t$ of $h|_{\partial v}$, $v$ intersects $h^{-1}(t)$ only in disks.
\item Each saddle point of $h|_{\partial v}$ is essential.
\end{enumerate}
\end{definition}

\begin{definition}
Two Morse positions $v, v'\in V$ are {\em equivalent} if there exists an ambient isotopy $\{f_t\}_{t\in [0,1]}$ of $S^3$ taking $v$ to $v'$ such that 
\begin{enumerate}
\item For any $t\in [0,1]$, $h|_{f_t(\partial v)}$ is a Morse function.
\item For any $t\in[0,1]$, there is no pair of mimimum/lower saddle and maximum/upper saddle of $h|_{f_t(\partial v)}$ with the same critical value.
\end{enumerate}
We call such an ambient isotopy $\{f_t\}_{t\in [0,1]}$ a {\em Morse isotopy}.
\end{definition}

\subsection{Morse positions for knotted trivalent graphs}

Let $\gamma, \gamma'\subset S^3$ be two trivalent graphs embedded in the 3-sphere.
We say that $\gamma, \gamma'$ are {\em equivalent} if there exists an ambient isotopy $\{f_t\}_{t\in [0,1]}$ of $S^3$ taking $\gamma$ to $\gamma'$.

Let $\Gamma$ be a knotted trivalent graph type (i.e. an equivalence class of knotted trivalent graphs).
We always assume that $\gamma\in \Gamma$ does not intersect the north and south poles $\pm\infty=(0,0,0,\pm1)$ of $S^3$.

\begin{definition}[Normal form of \cite{GST}, \cite{S}, \cite{ST}]
We say that $\gamma\in \Gamma$ is a {\em Morse position} if
\begin{enumerate}
\item For each edge $e$, the critical points of $h|_e$ are nondegenerate and each lies in the interior of $e$.
\item All critical points of $h|_{\gamma}$ and vertices have distinct values.
\item Each vertex is either $Y$-vertex or $\lambda$-vertex.
\end{enumerate}
\end{definition}

\begin{definition}
Two Morse positions $\gamma, \gamma'\in \Gamma$ are {\em equivalent} if there exists an ambient isotopy $\{f_t\}_{t\in [0,1]}$ of $S^3$ taking $\gamma$ to $\gamma'$ such that 
\begin{enumerate}
\item For any $t\in [0,1]$ and for each edge $e$, the critical points of $h|_{f_t(e)}$ are nondegenerate and each lies in the interior of $e$.
\item For any $t\in [0,1]$, each vertex $f_t(v)$ is either $Y$-vertex or $\lambda$-vertex.
\item For any $t\in[0,1]$, there is no pair of mimimum/$Y$-vertex and maximum/$\lambda$-vertex of $h|_{f_t(\gamma)}$ with the same critical value.
\end{enumerate}
We call such an ambient isotopy $\{f_t\}_{t\in [0,1]}$ a {\em Morse isotopy}.
\end{definition}

\subsection{Several moves for Morse positions}

We prepare several moves for Morse positions of knotted trivalent graphs.
By taking the neighborhood, these moves can be considered as moves for Morse positions of handlebody-knots.

First we define $B1$, $B2$, $B3$ moves as shown in Figure \ref{B123}.
These moves are reversible and obtained by Morse isotopies for both knotted trivalent graphs and handlebody-knots.

\begin{figure}[htbp]
	\begin{center}
	\begin{tabular}{cc}
	\includegraphics[width=0.35\textwidth,pagebox=cropbox,clip]{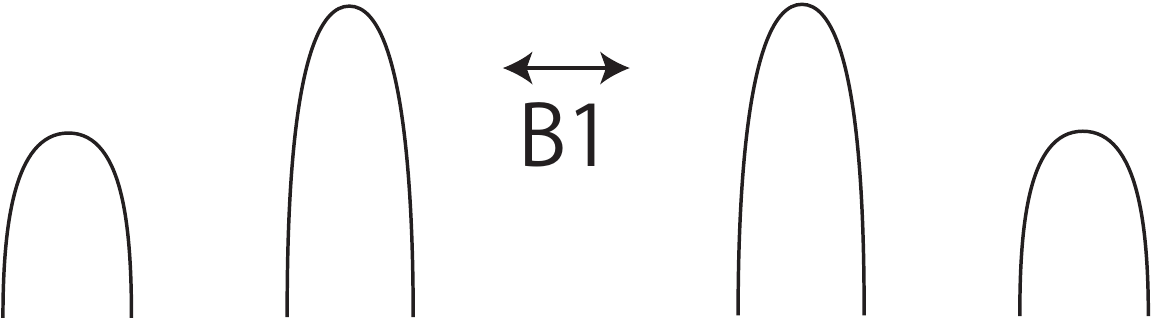}&
	\includegraphics[width=0.35\textwidth,pagebox=cropbox,clip]{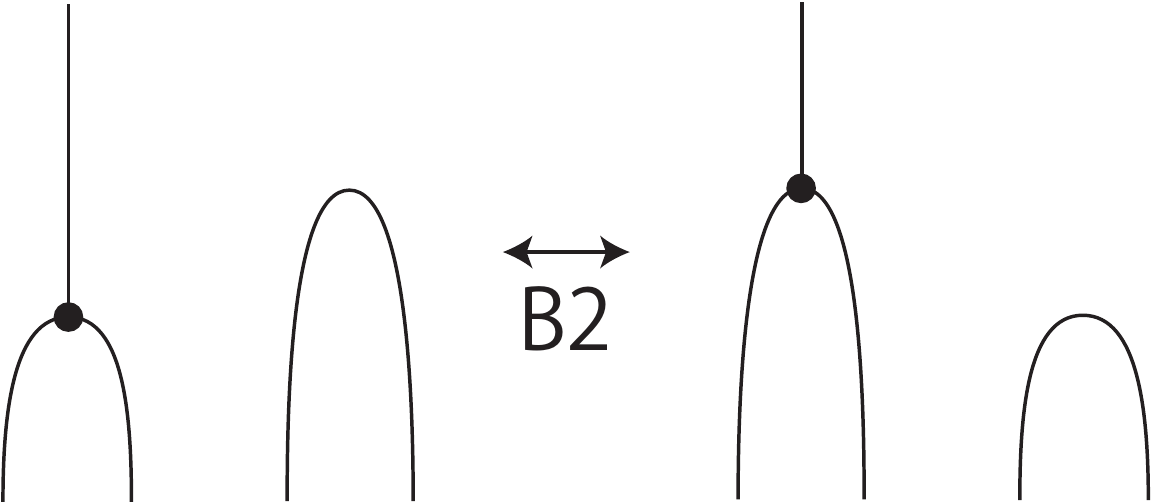}\\
	\includegraphics[width=0.35\textwidth,pagebox=cropbox,clip]{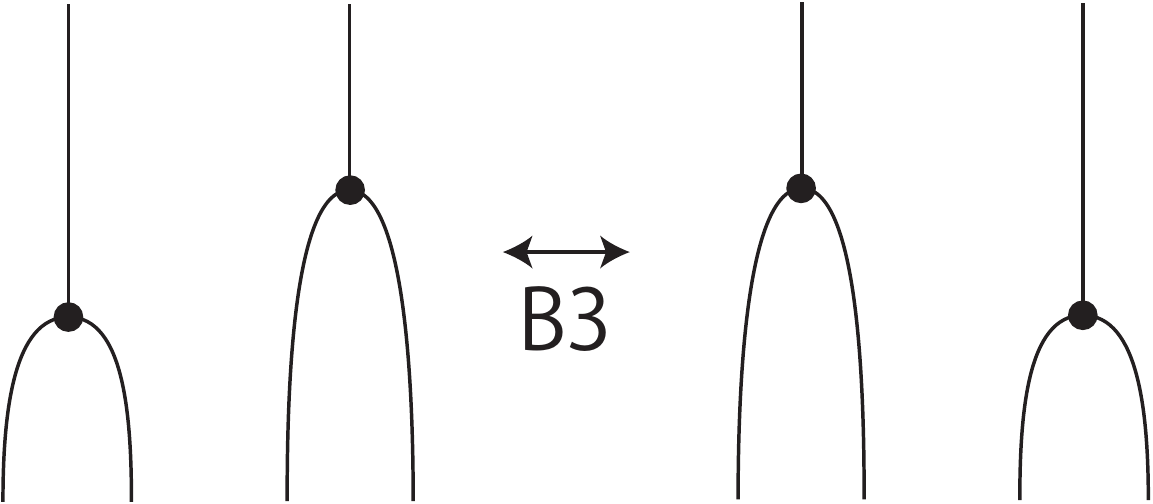}&
	\end{tabular}
	\end{center}
	\caption{$B1$, $B2$, $B3$ moves}
	\label{B123}
\end{figure}

Next we define $B4$, $B5$ moves as shown in Figure \ref{B45}.
These moves are reversible and obtained by Morse isotopies for only handlebody-knots.
Those are not Morse isotopies for knotted trivalent graphs.
The $B4$ move is appeared as 7, 8 moves in \cite{D}.
The $B5$ move is appeared as $Rvi$ in \cite{II}.

\begin{figure}[htbp]
	\begin{center}
	\begin{tabular}{cc}
	\includegraphics[width=0.35\textwidth,pagebox=cropbox,clip]{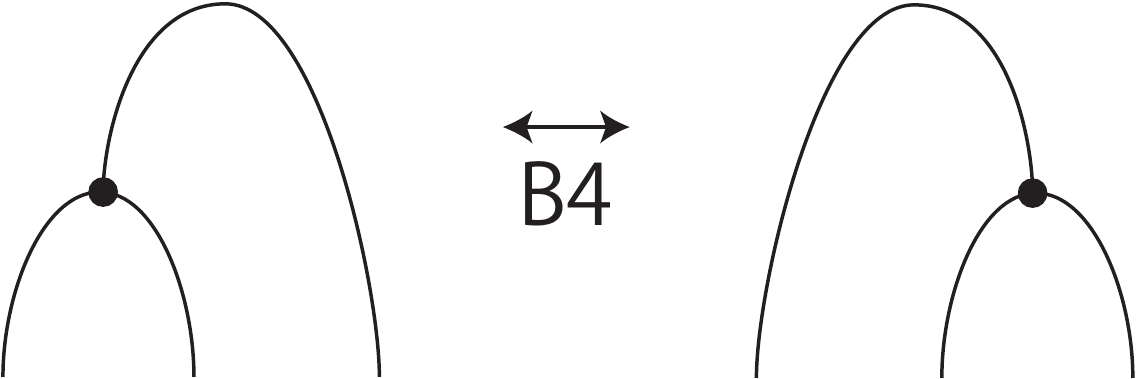}&
	\includegraphics[width=0.35\textwidth,pagebox=cropbox,clip]{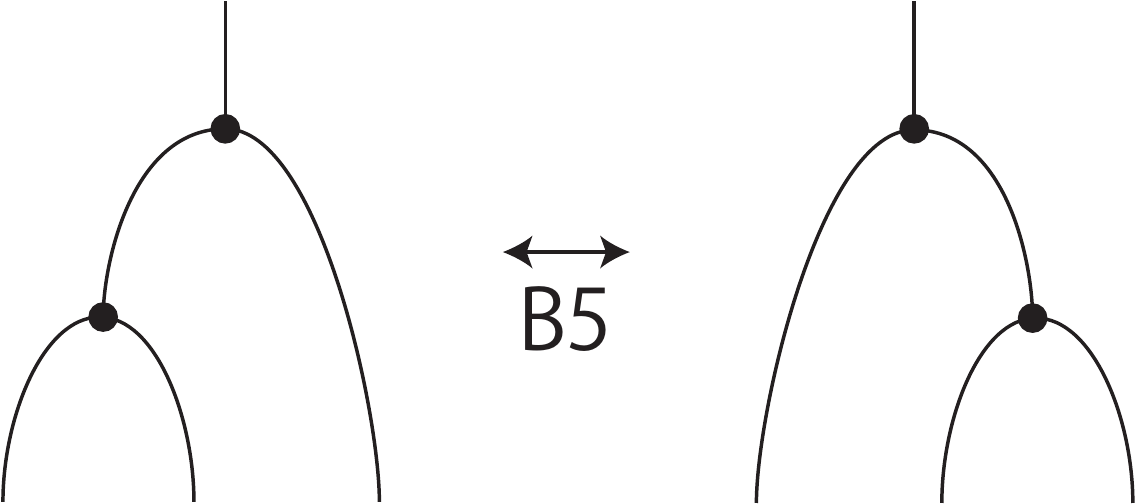}
	\end{tabular}
	\end{center}
	\caption{$B4$, $B5$ moves}
	\label{B45}
\end{figure}

Next we define stabilizations $S1$, $S2$, $S3$ as shown in Figure \ref{S123}.
These stabilizations are irreversible and not obtained by Morse isotopies for both knotted trivalent graphs and handlebody-knots.
The reverse move of a stabilization is called a {\em destabilization}.
The stabilization $S1$ and its destabilization are appeared as 1, 2 moves in \cite{D} and $Pii$ in \cite{II}.
The stabilization $S2$ and its destabilization are appeared as 3, 4, 5, 6 moves in \cite{D} and $Piv$ in \cite{II}.

\begin{figure}[htbp]
	\begin{center}
	\begin{tabular}{cc}
	\includegraphics[width=0.35\textwidth,pagebox=cropbox,clip]{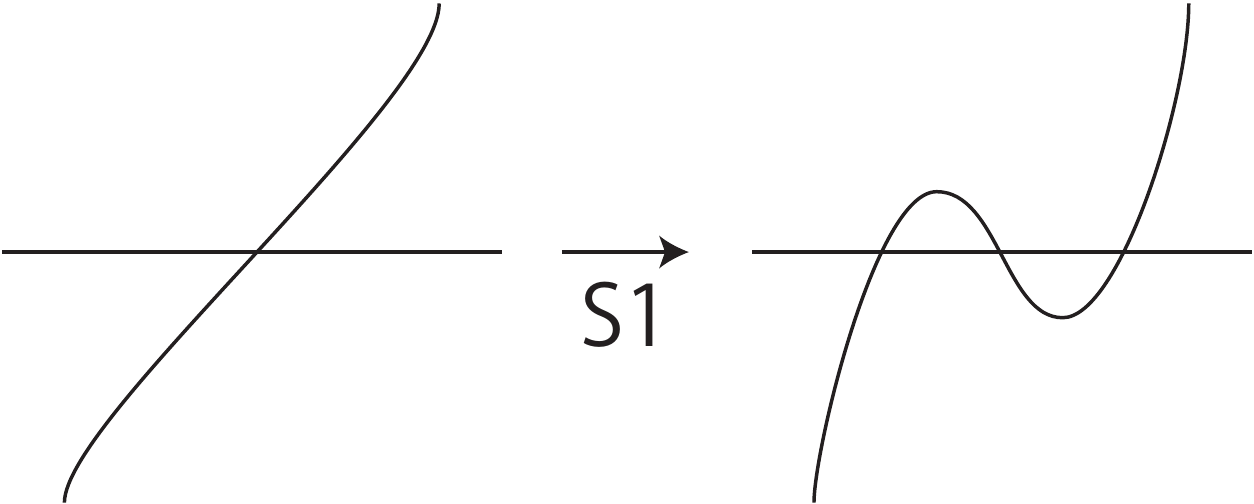}&
	\includegraphics[width=0.35\textwidth,pagebox=cropbox,clip]{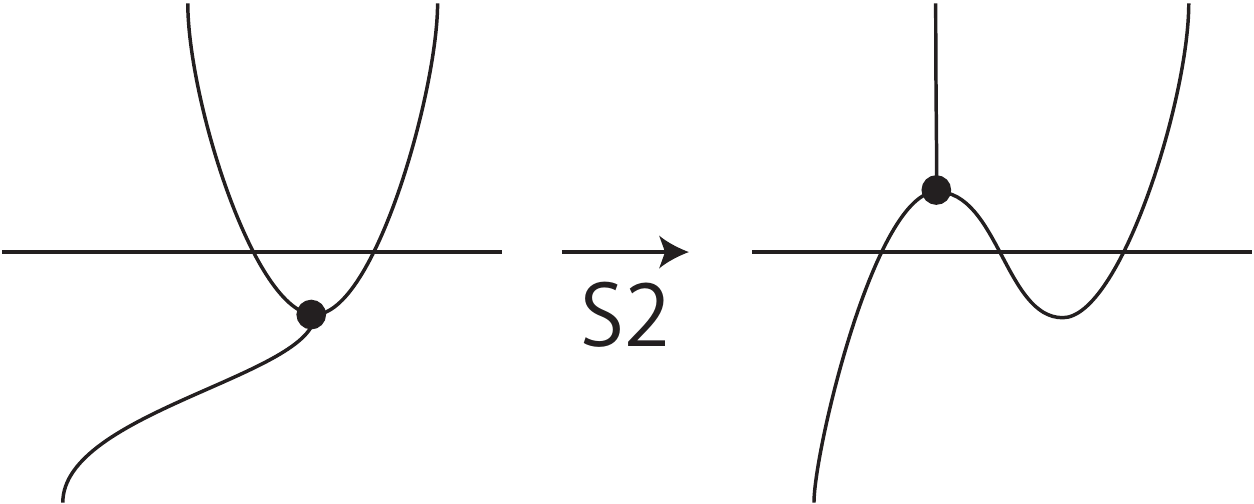}\\
	\includegraphics[width=0.35\textwidth,pagebox=cropbox,clip]{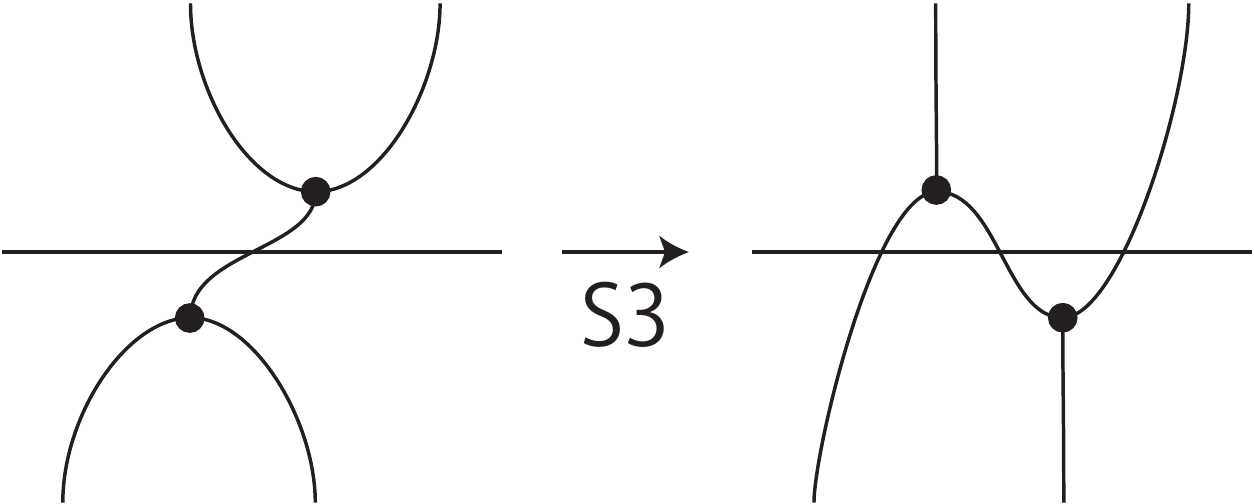}&
	\end{tabular}
	\end{center}
	\caption{Stabilizations $S1$, $S2$, $S3$}
	\label{S123}
\end{figure}

Finally we define $M1$, $M2$, $M3$ moves as shown in Figure \ref{M123}.
These moves are irreversible.
Those are not Morse isotopies for both knotted trivalent graphs and handlebody-knots.

\begin{figure}[htbp]
	\begin{center}
	\begin{tabular}{cc}
	\includegraphics[width=0.35\textwidth,pagebox=cropbox,clip]{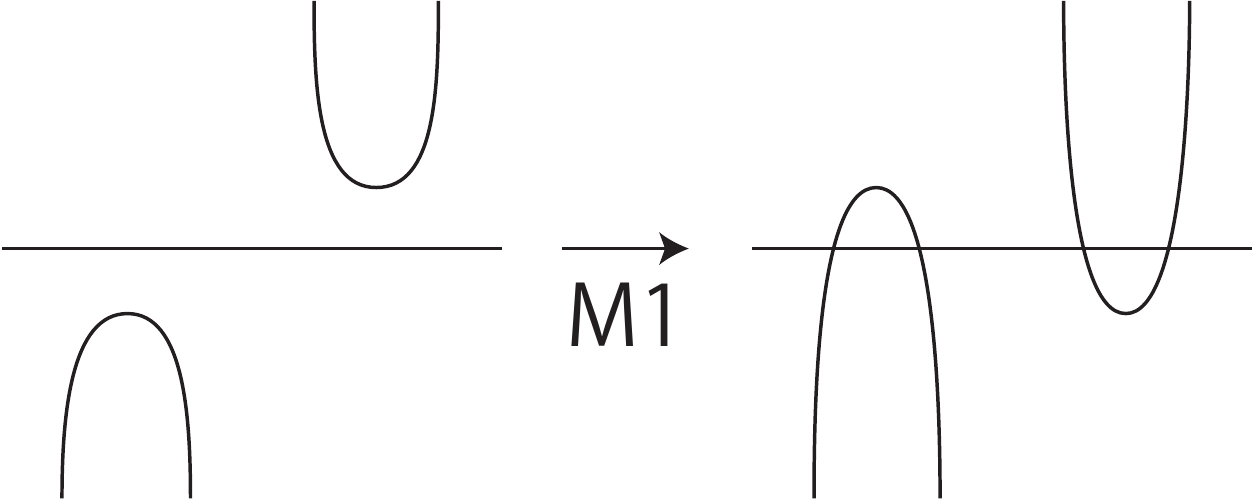}&
	\includegraphics[width=0.35\textwidth,pagebox=cropbox,clip]{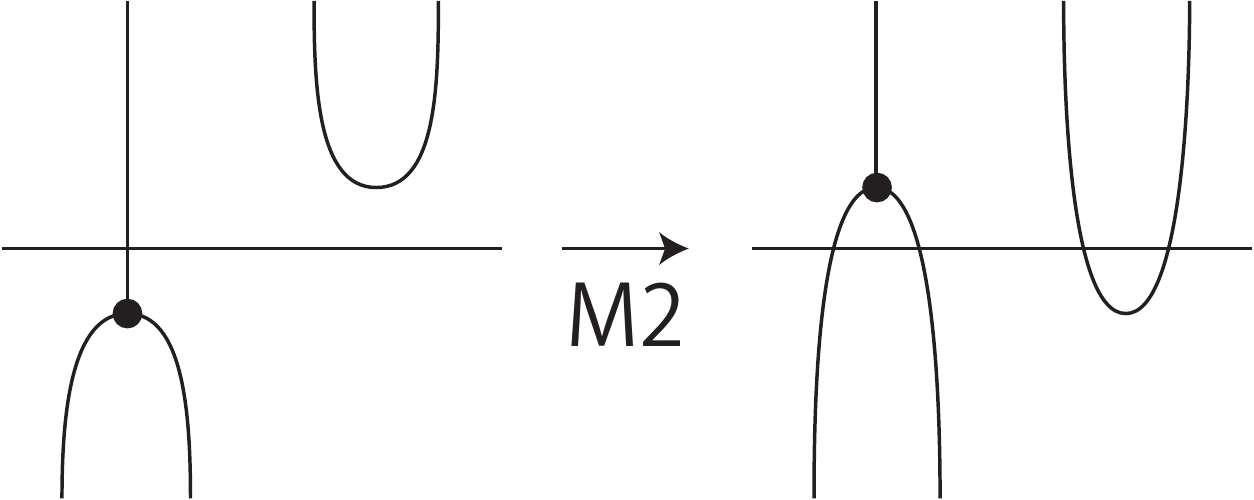}\\
	\includegraphics[width=0.35\textwidth,pagebox=cropbox,clip]{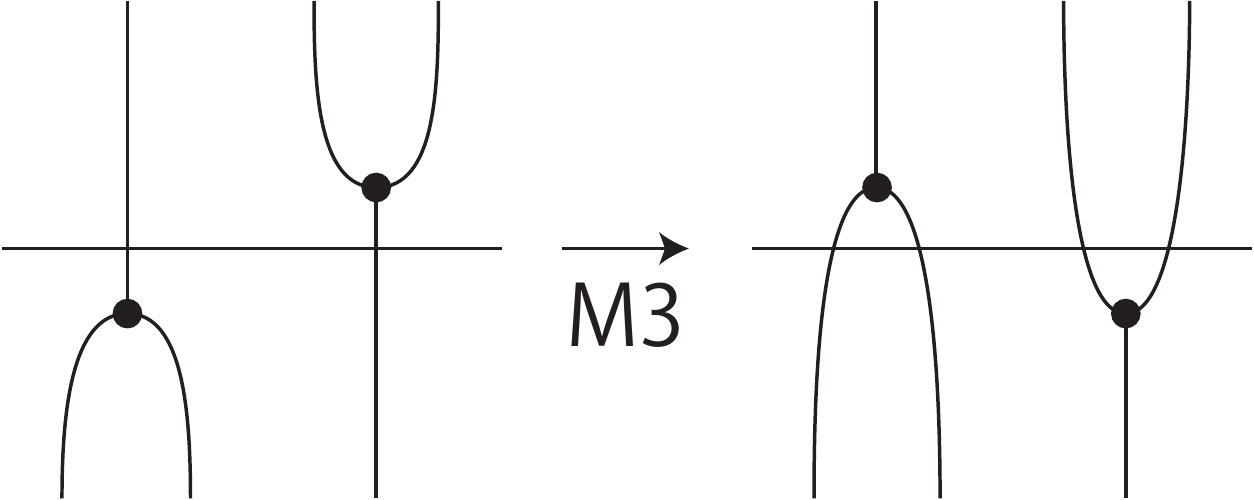}&
	\end{tabular}
	\end{center}
	\caption{$M1$, $M2$, $M3$ moves}
	\label{M123}
\end{figure}

\subsection{Correspondence between two Morse positions}

For a given Morse position $\gamma$ of a knotted trivalent graph type $\Gamma$, by taking a regular neighborhood $N(\gamma)$ of $\gamma$, we obtain a Morse position $v$ of a handlebody-knots $V$ such that $v=N(\gamma)$.
Conversely, for a given Morse position $v$ of a handlebody-knot type $V$, by taking a spine $\sigma$ of $v$, we obtain a Morse position $\gamma$ of a knotted trivalent graph type $\Gamma$ such that $\gamma=\sigma$.
In this correspondence, the definitions of equivalence for Morse positions coincides excepet for $B4$, $B5$ moves.
Hence the following holds.

\begin{proposition}\label{correspondence}
There is a correspondence between two Morse positions $v\in V$ and $\gamma\in \Gamma$.
Two Morse positions $v, v'\in V$ are equivalent if and only if corresponding $\gamma\in \Gamma$, $\gamma'\in \Gamma'$ are equivalent up to $B4$, $B5$ moves.
\end{proposition}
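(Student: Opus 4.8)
The plan is to realise the correspondence by two geometric operations --- taking a regular neighbourhood $N(\gamma)$ of a trivalent graph and taking a spine $\sigma(v)$ of a handlebody-knot --- and to show that the only discrepancy between the two equivalence relations is the non-canonicity of the spine, which is precisely what the $B4$ and $B5$ moves record. First I would set down the local dictionary between the two Morse structures that a sufficiently thin tube produces: a nondegenerate minimum (resp.\ maximum) of an edge of $\gamma$ corresponds to a minimum (resp.\ maximum) of $h|_{\partial N(\gamma)}$ together with an adjacent lower (resp.\ upper) essential saddle at a nearby level; a $Y$-vertex corresponds to a lower essential saddle; a $\lambda$-vertex corresponds to an upper essential saddle. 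With this dictionary one checks directly that $N(\gamma)$ is a Morse position (its intersection with a regular level is a neighbourhood of finitely many points in a $2$-sphere, hence a union of disks, and the saddles are essential since the loops involved are meridians of edges of a trivalent graph), and conversely that, since each level meets $v$ in disks, choosing a core arc in every disk component of $v\cap h^{-1}(t)$ and letting these sweep out edges --- joining into a $Y$- or $\lambda$-vertex at each lower or upper saddle, and passing through a nondegenerate extremum where $\partial v$ does --- produces a Morse-position spine $\sigma(v)$ with $N(\sigma(v))$ isotopic to $v$; note that $\sigma(v)$ is not unique.

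For the easy implication, suppose $\gamma$ and $\gamma'$ are related by a finite sequence of graph Morse isotopies and $B4$, $B5$ moves. Thickening each graph Morse isotopy yields a handlebody Morse isotopy of the neighbourhoods: the forbidden coincidences of critical values match up under the dictionary (a minimum or $Y$-vertex against a maximum or $\lambda$-vertex becomes a minimum or lower saddle against a maximum or upper saddle, and for thin enough tubes the extra, paired, saddles introduce no new coincidences), while $B4$, $B5$ are themselves handlebody Morse isotopies by definition (Figure \ref{B45}). Hence $N(\gamma)$ and $N(\gamma')$, and therefore $v$ and $v'$, are equivalent Morse positions.

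Conversely, suppose $v\simeq v'$ via a handlebody Morse isotopy $\{f_t\}_{t\in[0,1]}$. Since $h|_{f_t(\partial v)}$ is a Morse function for every $t$, no critical point of $\partial v$ is ever born or dies, so the combinatorial Morse type is constant. I then claim that if $\gamma\subset v$ is a fixed Morse-position spine, the transported family $\{f_t(\gamma)\}$ stays a Morse position of a trivalent graph: a degeneration of $h|_{f_t(e)}$ on an edge, a collision of an interior critical point of an edge with a vertex, or a forbidden coincidence of critical values of $f_t(\gamma)$ would each, via thin tubes, force $h|_{f_t(\partial v)}$ to be non-Morse or violate condition~(2) for $\{f_t\}$. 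Thus $\{f_t(\gamma)\}$ is a graph Morse isotopy carrying $\gamma$ to a Morse-position spine $\tilde\gamma:=f_1(\gamma)$ of $v'$, and what remains is to compare $\tilde\gamma$ with the chosen spine $\gamma'=\sigma(v')$; together with the fact that $N(\sigma(v))$ is handlebody-Morse-isotopic to $v$ (a controlled expansion of the spine to a thin tube), this shows that $N$ and $\sigma$ are mutually inverse on equivalence classes.

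The main obstacle is therefore the remaining claim: any two Morse-position spines of a fixed Morse-position handlebody-knot are related by a finite sequence of graph Morse isotopies and $B4$, $B5$ moves. I would prove it by a level-by-level analysis of $v$: away from finitely many heights the two spines can be isotoped to share a common core in each disk component of $v\cap h^{-1}(t)$, and the change in how the cores are routed past a saddle, a minimum, or a maximum of $\partial v$ --- equivalently, the reidentification of cores forced when disk components merge or split --- is in every case exactly one of the moves $B4$, $B5$, these having been introduced in \cite{D} and \cite{II} precisely as the elementary Morse-position spine-change moves. Granting this claim, $v\simeq v'$ forces $\gamma=\sigma(v)$ and $\gamma'=\sigma(v')$ to be related by graph Morse isotopies and $B4$, $B5$ moves; the converse is the easy implication; and since $N$ and $\sigma$ descend to mutually inverse bijections between equivalence classes, this is exactly the asserted correspondence.
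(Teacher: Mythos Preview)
The paper does not give a proof beyond the short paragraph preceding the statement, so you are supplying an argument where the author only asserts the result. Your overall architecture --- the neighbourhood/spine correspondence, the local dictionary between critical points, and especially the key lemma that any two Morse-position spines of a fixed Morse-position handlebody are related by graph Morse isotopies and $B4$, $B5$ --- is the right one and is more than the paper provides.

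There is, however, a genuine gap in the converse direction. You claim that if $\{f_t\}$ is a handlebody Morse isotopy then the transported family $\{f_t(\gamma)\}$ remains a Morse position of the graph, arguing that a degeneration on an edge would ``via thin tubes'' force $h|_{f_t(\partial v)}$ to be non-Morse. This inference is invalid: the ambient isotopy does not preserve the thin-tube relationship between $\gamma$ and $\partial v$. Concretely, take any $\{f_t\}$ supported in the interior of $v$; then $f_t(\partial v)=\partial v$ is constant (hence trivially Morse), yet $f_t(\gamma)$ can be an arbitrary isotopy of $\gamma$ inside $v$, in particular one that creates a cancelling max/min pair on an edge through a cubic degeneration. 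So $f_t(\gamma)$ need not stay Morse, and even when $f_1(\gamma)$ is again Morse it may differ from $\gamma$ by an $S1$ stabilization, which is neither a graph Morse isotopy nor a $B4$, $B5$ move. The repair is to abandon the transported spine and instead track the \emph{constructed} spine: your first-paragraph core construction $\sigma(\,\cdot\,)$ depends only on the Morse data of $\partial v$, so $\sigma(f_t(v))$ varies continuously with the critical points of $h|_{f_t(\partial v)}$; analysing the finitely many times where two critical values of the boundary coincide (which is exactly your ``level-by-level'' analysis, applied in the one-parameter family rather than at a single time) shows that $\sigma(f_t(v))$ changes only by graph Morse isotopies and $B4$, $B5$. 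Then your spine-comparison lemma at $t=0$ and $t=1$ finishes the argument.
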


\section{Stable equivalence of bridge positions}

\begin{definition}
Let $\gamma\in \Gamma$ be a Morse position.
We say that $\gamma$ is a {\em bridge position} if all maximum and $\lambda$-vertices have the positive critical values, and all minimum and $Y$-vertices have the negative critical values.
\end{definition}

\begin{theorem}\label{stable1}
Let $V$ be a handlebody-knot type and $v, v'\in V$ be two bridge positions.
Then there exists a bridge position $v''\in V$ which is obtained from both $v$ and $v'$ by finite sequences of stabilizations $S1$, $S2$ and Morse isotopies.
\end{theorem}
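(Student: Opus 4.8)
The plan is to transport the problem to knotted trivalent graphs and their diagrams, where Ishii's Reidemeister calculus for handlebody-knots (Theorem \ref{R}) applies, and then to lift every diagram-level move back to a finite sequence of $S1$, $S2$ and Morse isotopies of bridge positions. First I would use the correspondence of Proposition \ref{correspondence} to choose spines $\gamma$ of $v$ and $\gamma'$ of $v'$ that are themselves bridge positions of knotted trivalent graph types $\Gamma$, $\Gamma'$ with $N(\gamma)=v$ and $N(\gamma')=v'$. Since $v$ and $v'$ are equivalent handlebody-knots, $\Gamma$ and $\Gamma'$ have the same handlebody-knot type $V$ as regular neighborhood, so any diagrams of $\gamma$ and $\gamma'$ represent the same handlebody-knot. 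By Lemma \ref{compatible}, after a horizontal isotopy (which is a Morse isotopy and preserves the bridge condition) I may assume $\gamma$ and $\gamma'$ are regular bridge positions, so that $D=p(\gamma)$ and $D'=p(\gamma')$ are regular trivalent-graph diagrams; by Theorem \ref{R} they are then joined by a finite sequence of planar isotopies and Reidemeister-type moves for spatial trivalent graphs.

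The core step will be to show that, for a regular bridge position, each elementary move on the associated diagram lifts — after finitely many stabilizations — to a move on bridge positions that uses only $S1$, $S2$ and Morse isotopies; this is exactly the stable-equivalence statement for regular bridge positions (Theorem \ref{stable3}). I would organize this into three families of cases: (a) a planar isotopy of a regular diagram lifts to a horizontal, hence Morse, isotopy of the bridge position, after $S1$-stabilizations that create the extra maxima and minima needed to route strands past crossings and vertices; (b) each Reidemeister move, localized in a ball, is produced by first $S1$- and (crucially) $S2$-stabilizing near the relevant arc or vertex to reach a standard local bridge model, then applying a Morse isotopy, possibly followed by further stabilizations; (c) the auxiliary moves $B1$--$B5$, $S3$ and $M1$--$M3$ are redundant, i.e. each is a composition of $S1$, $S2$ and Morse isotopies — with $B4$ and $B5$ becoming Morse isotopies once one passes to regular neighborhoods. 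Concatenating along the path from $D$ to $D'$ produces a regular bridge position $\gamma''$ together with finite sequences of $S1$, $S2$ and Morse isotopies from $\gamma$ to $\gamma''$ and from $\gamma'$ to $\gamma''$; because each step is applied one-directionally, the outcome is a common third position rather than a direct $\gamma\to\gamma'$ sequence.

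To finish I would set $v''=N(\gamma'')$ and apply the regular-neighborhood construction to the two sequences: each $S1$ and $S2$ of trivalent graphs is carried to an $S1$ and $S2$ of handlebody-knots, each Morse isotopy to a Morse isotopy, and the only trivalent-graph moves that fail to be handlebody Morse isotopies, namely $B4$ and $B5$, become Morse isotopies of handlebodies and are thereby absorbed. By Proposition \ref{correspondence}, $v''$ is then a bridge position of $V$ obtained from both $v$ and $v'$ by finite sequences of $S1$, $S2$ and Morse isotopies, which is the assertion.

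I expect the main obstacle to be the core step above: the explicit case analysis proving that every planar isotopy and every Reidemeister move on a regular diagram lifts to $S1$/$S2$-stabilizations followed by a Morse isotopy. The delicate local pictures are those near a crossing — where the two sheets of $\partial v$ sit at different heights and a strand must be rerouted over or under — and those near a $Y$- or $\lambda$-vertex, together with the verification that $S3$ and the $M$-moves can genuinely be traded for $S1$ and $S2$; here $S2$ is the key, since it is precisely the stabilization that allows a vertex, or an entire bridge, to be pushed across a critical level.
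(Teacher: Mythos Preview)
Your outline is correct and matches the paper's approach: reduce to trivalent graphs via Proposition~\ref{correspondence}, regularize via Lemma~\ref{compatible}, apply Ishii's Theorem~\ref{R} to the diagrams, lift each $R1$--$R6$ move to a sequence of $S1$, $S2$, $B4$, $B5$ and Morse isotopies (this is Theorem~\ref{stable3} and Lemma~\ref{Reidemeister}), and finally absorb $B4$, $B5$ into handlebody Morse isotopies. Two small corrections to your plan: the moves $S3$ and $M1$--$M3$ never arise when you start from bridge positions, so item~(c) is unnecessary (and in fact $S3$ is \emph{not} a composition of $S1$, $S2$ and Morse isotopies alone---it needs an $S2$-destabilization, cf.\ Figure~\ref{S33}); and after lifting the Reidemeister sequence you only know $\widetilde{p(\gamma)}=\widetilde{p(\gamma')}$, so you still need a short final argument (the paper's last lemma) showing that two regular bridge positions with identical diagrams become equal after further $S1$, $S2$, $B4$ moves.
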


\begin{theorem}\label{stable2}
Let $\Gamma$, $\Gamma'$ be two knotted trivalent graph types and $\gamma\in \Gamma$, $\gamma'\in \Gamma'$ be two bridge positions.
If $N(\gamma)$ and $N(\gamma')$ are equivalent as handlebody-knots, then there exist a knotted trivalent graph type $\Gamma''$ and a bridge position $\gamma''\in \Gamma''$ which is obtained from both $v$ and $v'$ by finite sequences of stabilizations $S1$, $S2$, $B4$, $B5$ moves and Morse isotopies.
\end{theorem}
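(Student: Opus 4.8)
The plan is to deduce Theorem~\ref{stable2} from Theorem~\ref{stable1} and Proposition~\ref{correspondence}, by lifting a common stabilization of the handlebody-knots to their spines. I would begin by setting $v=N(\gamma)$ and $v'=N(\gamma')$, which lie in a common handlebody-knot type $V$ by hypothesis. The first point to check is that $v$ and $v'$ are themselves bridge positions of $V$. This is a routine translation: along an edge of $\gamma$ a local maximum (resp.\ minimum) of $h$ becomes a maximum (resp.\ minimum) of $h|_{\partial N(\gamma)}$ and no other critical points arise on edges, while at a $\lambda$-vertex (resp.\ $Y$-vertex) of $\gamma$ the surface $\partial N(\gamma)$ acquires an upper (resp.\ lower) saddle, two meridian disks merging into one. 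Every such saddle is essential because a meridian of an edge of a trivalent spine is non-contractible in $\partial N(\gamma)$. Consequently the bridge condition on $\gamma$---maxima and $\lambda$-vertices positive, minima and $Y$-vertices negative---becomes exactly the bridge condition on $v$, and similarly for $\gamma'$ and $v'$; a preliminary horizontal isotopy can be used if one wishes this to hold literally.

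By Theorem~\ref{stable1} there is then a bridge position $v''\in V$ together with finite sequences of $S1$, $S2$ stabilizations and Morse isotopies of handlebody-knots
$$v=v_0\to v_1\to\cdots\to v_n=v'',\qquad v'=v'_0\to v'_1\to\cdots\to v'_{n'}=v''.$$
Fix a spine $\gamma''$ of $v''$; it is a bridge position of some knotted trivalent graph type $\Gamma''$, and by Proposition~\ref{correspondence} the type $\Gamma''$ and the class of $\gamma''$ modulo $B4$, $B5$ moves and Morse isotopy do not depend on the choice of spine.

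Next I would lift the first sequence to the spine level, processing one arrow at a time and maintaining a trivalent graph $\gamma_k$ with $N(\gamma_k)=v_k$ and $\gamma_0=\gamma$. If $v_k\to v_{k+1}$ is a Morse isotopy, carrying $\gamma_k$ along it gives a spine $\gamma_{k+1}$ of $v_{k+1}$, and Proposition~\ref{correspondence} says the passage from $\gamma_k$ to $\gamma_{k+1}$ is realized by Morse isotopies and $B4$, $B5$ moves. If $v_k\to v_{k+1}$ is an $S1$ (resp.\ $S2$) stabilization, it is supported in a ball $B$ meeting $v_k$ in a standard family of meridian disks; once $\gamma_k$ has been isotoped inside the handlebody $v_k$ (see below) so that it meets $B$ in the standard arcs of Figure~\ref{S123}, the stabilization $v_k\to v_{k+1}$ is precisely the regular neighborhood of the trivalent-graph stabilization $S1$ (resp.\ $S2$) applied to $\gamma_k$, which I take as $\gamma_{k+1}$. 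Concatenating the arrows carries $\gamma$ to a spine of $v''$ by a finite sequence of $S1$, $S2$, $B4$, $B5$ moves and Morse isotopies, hence to $\gamma''$ up to such moves; running the same argument on the second sequence carries $\gamma'$ to $\gamma''$, which is the conclusion of Theorem~\ref{stable2}.

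The hard part will be the stabilization arrows in the last step: I must show that the spine $\gamma_k$ can be brought into the standard position near the supporting ball $B$ using only moves permitted in the theorem, and in particular that the ambient isotopy of $v_k$ doing this can be chosen to remain Morse up to $B4$, $B5$ moves. In essence this is the statement that any two spines of a handlebody-knot in bridge position are related by $S1$, $S2$, $B4$, $B5$ moves and Morse isotopies, and it is precisely here that the moves $B4$ and $B5$---isotopies of a handlebody that are not isotopies of its spine---are indispensable. By contrast the verifications in the first paragraph, that $N(\gamma)$ is genuinely a bridge position and that its vertex saddles are essential, are routine.
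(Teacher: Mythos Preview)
Your reduction runs in the wrong direction relative to the paper's logic, so as written it is circular. In the paper, Theorem~\ref{stable1} is not established independently; the sentence ``By Proposition~\ref{correspondence}, Theorem~\ref{stable1} and~\ref{stable2} are equivalent'' is used to obtain Theorem~\ref{stable1} \emph{from} Theorem~\ref{stable2}, not the other way around. The substantive argument is Theorem~\ref{stable3}, proved directly at the level of trivalent graphs, and Theorem~\ref{stable2} follows from it via Lemma~\ref{compatible} and Ishii's Theorem~\ref{R}. Taking Theorem~\ref{stable1} as an input therefore assumes what is to be proved.

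The paper's actual route is quite different from yours. One first isotopes $\gamma$ and $\gamma'$ horizontally to \emph{regular} bridge positions (Lemma~\ref{compatible}), so that the projection $p$ produces honest diagrams $\widetilde{p(\gamma)}$ and $\widetilde{p(\gamma')}$. Since $N(\gamma)$ and $N(\gamma')$ are neighborhood equivalent, Theorem~\ref{R} supplies a finite sequence of diagram moves $R1$--$R6$ connecting them. The core of the proof, Lemma~\ref{Reidemeister}, shows move by move that each $Ri$ can be realized on the regular bridge position by a combination of $S1$, $S2$, $B4$, $B5$ and Morse isotopies; Lemma~\ref{intersects} is the technical device that lets one arrange any chosen subarc to lie on the convenient side of the bridge sphere before performing a move. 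A final lemma disposes of the residual ambiguity when the two diagrams already coincide but the bridge positions do not.

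You correctly isolate the ``hard part'' of your own approach---bringing the spine into standard position near the stabilization ball using only the permitted moves---but you do not carry it out, and it is not a formality: it is essentially the claim that handlebody stabilizations lift to graph stabilizations modulo $B4$, $B5$ and Morse isotopy, which is close in strength to what you are trying to prove. The paper avoids this difficulty entirely by never leaving the graph/diagram level.
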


By Proposition \ref{correspondence}, Theorem \ref{stable1} and \ref{stable2} are equivalent.

\subsection{Regular bridge position}

An ambient isotopy $\{f_t\}_{t\in [0,1]}$ of $S^3$ is called a {\em horizontal isotopy} (resp. {\em vertical isotopy}) if for any $t\in[0,1]$, $h\circ f_t=h\circ id_{S^3}$ (resp. $p\circ f_t=p\circ id_{S^3}$).

\begin{definition}
Let $\Gamma$ be a knotted trivalent graph type and $\gamma\in\Gamma$ be a bridge position.
We say that $\gamma$ is {\em regular} if 
\begin{enumerate}
\item $p|_{\gamma\cap B_{\pm}}$ is an injection.
\item $p|_{\gamma}$ is regular (i.e. whose multiple points are only finitely many transversal double points of the edges.).
\end{enumerate}
\end{definition}

\begin{definition}
Let $\gamma\in \Gamma$ be a bridge position.
Put $\gamma_{\pm}=\gamma\cap B_{\pm}$.
Then $\gamma_{\pm}$ is isotopic in $B_{\pm}$ fixing $\partial \gamma_{\pm}=\gamma_{\pm}\cap \partial B_{\pm}$ so that $\gamma_{\pm}$ is contained in $\partial B_{\pm}$.
Therefore, there exists a complex $\Sigma_{\pm}=\gamma_{\pm}\times [0,1]/(\partial \gamma_{\pm}\times \{t\}\sim \partial \gamma_{\pm}\times \{t'\})$ ($t,t'\in [0,1]$) embedded in $B_{\pm}$ such that $\gamma_{\pm}\times \{1\}=\gamma_{\pm}$, $\Sigma_{\pm}\cap \partial B_{\pm}=\gamma_{\pm}\times \{0\}$.
We call this complex $\Sigma_{\pm}$ is a {\em canceling complex} for $\gamma_{\pm}$.
We say that a canceling complex $\Sigma_{\pm}$ is {\em monotone} if $h|_{\rm{int}\Sigma_{\pm}}$ has no critical point, where $\rm{int}\Sigma_{\pm}=\Sigma_{\pm}-(\gamma_{\pm}\times \{0,1\})$.
Moreover, we say that a canceling complex $\Sigma_{\pm}$ is {\em vertical} if $p(\Sigma_{\pm})=\gamma_{\pm}\times \{0\}$.
\end{definition}

\begin{lemma}\label{compatible}
Let $\Gamma$ be a knotted trivalent graph type and $\gamma\in\Gamma$ be a bridge position.
Then there exists a horizontal isotopy $\{f_t\}_{t\in [0,1]}$ of $S^3$ such that $f_1(\gamma)$ is regular. 
\end{lemma}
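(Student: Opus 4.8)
The plan is to secure the two conditions of regularity in turn: first to make $p$ injective on each of $\gamma_+:=\gamma\cap B_+$ and $\gamma_-:=\gamma\cap B_-$ (condition (1)), and then to obtain a regular projection (condition (2)) by a small generic perturbation. As a preliminary step, fix a small $\delta>0$ so that $h^{-1}([-\delta,\delta])$ contains no vertex or critical point of $\gamma$; then $\gamma\cap h^{-1}([-\delta,\delta])$ is a union of arcs monotone in $h$, one over each point of $\gamma\cap S^2$, and a horizontal isotopy may be used to make these arcs vertical. Since $\gamma$ is a bridge position, $\gamma_+$ is a properly embedded forest in the ball $B_+$ whose only interior critical points of $h$ are maxima and $\lambda$-vertices; as noted in the definition of canceling complex, $\gamma_+$ is isotopic in $B_+$ rel $\partial\gamma_+$ into $\partial B_+$, so it bounds a canceling complex $\Sigma_+$, and similarly $\gamma_-$ bounds $\Sigma_-\subset B_-$.

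The core of the argument is the claim that after a horizontal isotopy of $S^3$ one may take $\Sigma_+$ to be \emph{monotone} and then \emph{vertical} (and likewise $\Sigma_-$, by an isotopy supported away from $S^2$ in the other ball, so the two constructions do not interfere). Granting this, verticality of $\Sigma_+$ forces the radial projection $p$ to collapse the $[0,1]$-fibers of $\Sigma_+=\gamma_+\times[0,1]/\!\sim$, so $p$ restricts to a homeomorphism of $\gamma_+=\gamma_+\times\{1\}$ onto $\gamma_+\times\{0\}\subset S^2=\partial B_+$ (using that $p$ is the identity on $S^2$); in particular $p|_{\gamma_+}$ is injective, and doing the same below yields condition (1). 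Monotonicity I would obtain by the standard normal-form/thin-position technique (compare the normal form of \cite{GST}, \cite{S}, \cite{ST} and the straightening arguments of \cite{L}, \cite{Z}): reorder the critical values of $h|_{\Sigma_+}$ and then, one at a time, cancel or push out to $\partial\Sigma_+$ the interior critical points of $h|_{\Sigma_+}$, using that $B_+$ is a ball on which $h$ has a single maximum and that $\Sigma_+$ collapses onto $\gamma_+\times\{0\}$. Passing from a monotone $\Sigma_+$ to a vertical one is then a straightening: each slice $\Sigma_+\cap h^{-1}(t)$ is a forest in the $2$-sphere $h^{-1}(t)$ that, by monotonicity, descends to a sub-forest of $\gamma_+\times\{0\}$, and one realizes these identifications by isotopies of the spheres $h^{-1}(t)$ chosen smoothly in $t$ and trivially near $t=0$, assembled into a horizontal isotopy.

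Finally, with condition (1) in hand, a $C^1$-small generic horizontal isotopy puts $p|_\gamma$ in general position --- only finitely many transverse double points, none at a vertex and no triple points --- which is condition (2); and since injectivity of $p|_{\gamma_\pm}$ is a $C^1$-open condition on the compact forests $\gamma_\pm$, it is preserved by this last perturbation. Composing all of the horizontal isotopies above produces the required $\{f_t\}_{t\in[0,1]}$ with $f_1(\gamma)$ regular. I expect the monotonization step to be the principal obstacle: removing the interior critical points of $h|_{\Sigma_+}$ by an ambient isotopy that is itself \emph{horizontal} is subtler than in the classical Reidemeister--Singer setting, where arbitrary ambient isotopies are available, and one must take care that the cancellations respect both the product structure of the canceling complex and the vertical arcs already arranged near $S^2$.
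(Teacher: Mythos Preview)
Your architecture---build a monotone canceling complex $\Sigma_\pm$, straighten it to a vertical one by a horizontal isotopy, then perturb generically for condition~(2)---is exactly the paper's, and your level-by-level straightening matches the paper's Claim~2.

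Where you diverge is in producing the monotone $\Sigma_+$. You propose to start from an arbitrary canceling complex and cancel the interior critical points of $h|_{\Sigma_+}$, flagging this as the hard step. Two remarks. First, your worry in the last paragraph rests on a confusion: this step is about \emph{choosing} a complex $\Sigma_+$ for the given $\gamma_+$, not about isotoping $\gamma$, so no horizontality constraint applies---you may isotope $\Sigma_+$ rel $\gamma_+\times\{0,1\}$ freely in $B_+$, or simply pick a different $\Sigma_+$. (Indeed a horizontal ambient isotopy preserves $h$ and hence cannot create or destroy critical points of $h|_{\Sigma_+}$ at all, so your phrasing there cannot be what you intend.) Second, the paper sidesteps the cancellation entirely with a short trick: since $\gamma_+$ has only maxima and $\lambda$-vertices, there is a horizontal isotopy $f$ of $B_+$ with $p|_{f_1(\gamma_+)}$ injective; for $f_1(\gamma_+)$ the vertical canceling complex exists tautologically and is monotone; and because $f$ preserves $h$, the pullback $f_1^{-1}(\Sigma_+)$ is a monotone canceling complex for the original $\gamma_+$. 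This replaces your thin-position argument on a $2$-complex by a pullback, and the difficulty you anticipated simply does not arise.
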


\begin{proof}

\begin{claim}
There exists a monotone canceling complex $\Sigma_{\pm}$ for $\gamma_{\pm}$.
\end{claim}

\begin{proof}
Since $\gamma_+$ has only maxima, there exists a horizontal isotopy $\{f_t\}_{t\in [0,1]}$ of $B_+$ such that $p|_{f_1(\gamma_+)}$ is a homeomorphism.
Then there exists a vertical canceling complex $\Sigma_+$ for $f_1(\gamma_+)$.
Now we pull back $\Sigma_+$ by the inverse function $\{f_t^{-1}\}_{t\in [0,1]}$ to get a monotone canceling complex $f_1^{-1}(\Sigma_+)$ for $\gamma_+$.
\end{proof}

\begin{claim}
There exists a horizontal isotopy $\{f_t\}_{t\in [0,1]}$ taking a monotone canceling complex $\Sigma_{\pm}$ for $\gamma_{\pm}$ to a vertical canceling complex $f_1(\Sigma_{\pm})$ for $f_1(\gamma_+)$.
\end{claim}

\begin{proof}
There exists a sufficiently small positive real number $\epsilon$ such that $\Sigma_+\cap h^{-1}([0,\epsilon])$ is contained in $N(\Sigma_+\cap h^{-1}(0))\times [0,\epsilon]$.
Since $\Sigma_+$ is monotone, there exists a horizontal isotopy $\{g_t\}_{t\in [0,1]}$ of $h^{-1}([0,\epsilon])$ such that $g_1(\Sigma_+\cap h^{-1}([0,\epsilon]))$ is vertical.
To define a horizontal isotopy of the rest $h^{-1}((\epsilon,\infty))$, 
let $g_t|_{h^{-1}(x)} = g_t|_{h^{-1}(\epsilon)}$, for any $x\in (\epsilon,\infty)$ and $t\in[0,1]$.
Repeating this procedure, we will obtain a horizontal isotopy $\{f_t\}_{t\in [0,1]}$ as desired.
\end{proof}

The above two claims show Lemma \ref{compatible}.
\end{proof}

Let $\gamma\in \Gamma$ be a regular bridge position.
Then, we have a projection $p(\gamma)$.
For each double point $x$ of $p(\gamma)$, there are two points $p^{-1}(x)$ one of which lies $\gamma_+$ and another lies $\gamma_-$.
We obtain a diagram $\widetilde{p(\gamma)}$ by adding an over/under information on each double point $x$ depending on whether each point of $p^{-1}(x)$ lies $\gamma_+$ or $\gamma_-$.

To prove Theorem \ref{stable2}, we use the following theorem.

\begin{theorem}[{\cite[Corollary 2]{I}}]\label{R}
Let $D_1$ and $D_2$ be diagrams of spatial trivalent graphs $L_1$ and $L_2$, respectively. Then $L_1$ and $L_2$ are neighborhood equivalent if and only if $D_1$ and $D_2$
are related by a finite sequence of the moves $R1-6$ depicted in Figure \ref{R1-6}.
\end{theorem}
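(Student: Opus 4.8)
The plan is to factor the statement through the notion of a \emph{spine}, separating a purely three-dimensional step from a diagrammatic one. Since $L_1$ and $L_2$ are neighborhood equivalent exactly when the handlebodies $N(L_1)$ and $N(L_2)$ are ambient isotopic in $S^3$, I would first prove the three-dimensional reduction: $N(L_1)$ and $N(L_2)$ are ambient isotopic if and only if $L_1$ and $L_2$ are related, up to ambient isotopy of spatial trivalent graphs, by a finite sequence of \emph{IH-moves} --- contract an edge joining two trivalent vertices to a single $4$-valent vertex, then re-expand it along the complementary pair of half-edges. The "if" direction is immediate, since an IH-move does not change the regular neighborhood up to isotopy. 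For the "only if" direction, first use an ambient isotopy of $S^3$ to carry $N(L_2)$ onto $W:=N(L_1)$; then $L_1$ and the image of $L_2$ are two trivalent spines of the same handlebody $W$, and I would invoke the classical fact that any two trivalent spines of a handlebody are connected by a finite sequence of IH-moves and isotopies supported inside $W$ (fix a complete meridian disk system of $W$, make both spines transverse to it, use IH-moves to reduce to minimal intersection, and compare the two resulting pictures in the cut-open ball). Every isotopy produced this way lives in $W\subset S^3$, so it extends to an ambient isotopy of $S^3$.

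The second step passes from spatial graphs to diagrams. Here I would use the Reidemeister-type theorem for spatial graphs with flexible vertices: two diagrams present ambient isotopic graphs in $S^3$ if and only if they differ by a finite sequence of the three classical Reidemeister moves together with the vertex moves --- sliding a strand across a vertex, and repositioning a half-edge from one side of a vertex to the other in the plane. For a trivalent graph these vertex moves are exactly $R4$ and $R5$ of Figure~\ref{R1-6}. It then remains to check two local facts: a generic planar picture of an IH-move is precisely the move $R6$; and each of $R1$--$R6$ is realized by an ambient isotopy of the regular neighborhood ($R1$--$R5$ by the usual isotopy of the spine, $R6$ by the observation just made). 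Combining the two steps yields the asserted equivalence.

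The step I expect to be hardest is the three-dimensional reduction, and specifically the bookkeeping that forces \emph{all} the isotopies in play --- the one carrying $N(L_2)$ onto $N(L_1)$ and those connecting the two spines inside $W$ --- to assemble into a single ambient isotopy of $S^3$, with the finitely many IH-moves genuinely realized in $S^3$ between consecutive intermediate neighborhoods. Once the spine comparison is in hand, translating it into a statement about diagrams is a routine general-position argument: put $L_1$, the IH-moves, and the connecting isotopies in generic position with respect to the projection $p$, slice the resulting ambient isotopy into elementary moves, and recognize each as one of $R1$--$R6$; the instances where double points, or a double point and a vertex, cross one another are exactly what produce $R2$, $R3$, $R4$, and $R5$.
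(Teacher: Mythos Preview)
The paper does not prove this statement: Theorem~\ref{R} is quoted verbatim from Ishii \cite[Corollary~2]{I} and used as a black box, so there is no ``paper's own proof'' to compare against.

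That said, your outline is essentially how Ishii's original argument runs. He first establishes the diagrammatic Reidemeister theorem for spatial trivalent graphs with flexible vertices (ambient isotopy $\Longleftrightarrow$ $R1$--$R5$), then observes that neighborhood equivalence is ambient isotopy together with IH-moves on the spine, and that an IH-move projects to $R6$. Your identification of the nontrivial point --- that any two trivalent spines of a given handlebody are connected by IH-moves and isotopies supported in the handlebody --- is correct, and your plan for it (cut along a meridian system, normalize, compare in the ball) is the standard one. The remaining worry you flag, about assembling the various isotopies into a single ambient isotopy of $S^3$, is not a real obstacle: every isotopy of a compact set in $S^3$ extends to an ambient isotopy, and the IH-moves are local, so the concatenation is automatic. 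In short, your proposal is a faithful reconstruction of the cited proof rather than an alternative to anything in the present paper.
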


\begin{figure}[htbp]
	\begin{center}
	\includegraphics[width=0.8\textwidth,pagebox=cropbox,clip]{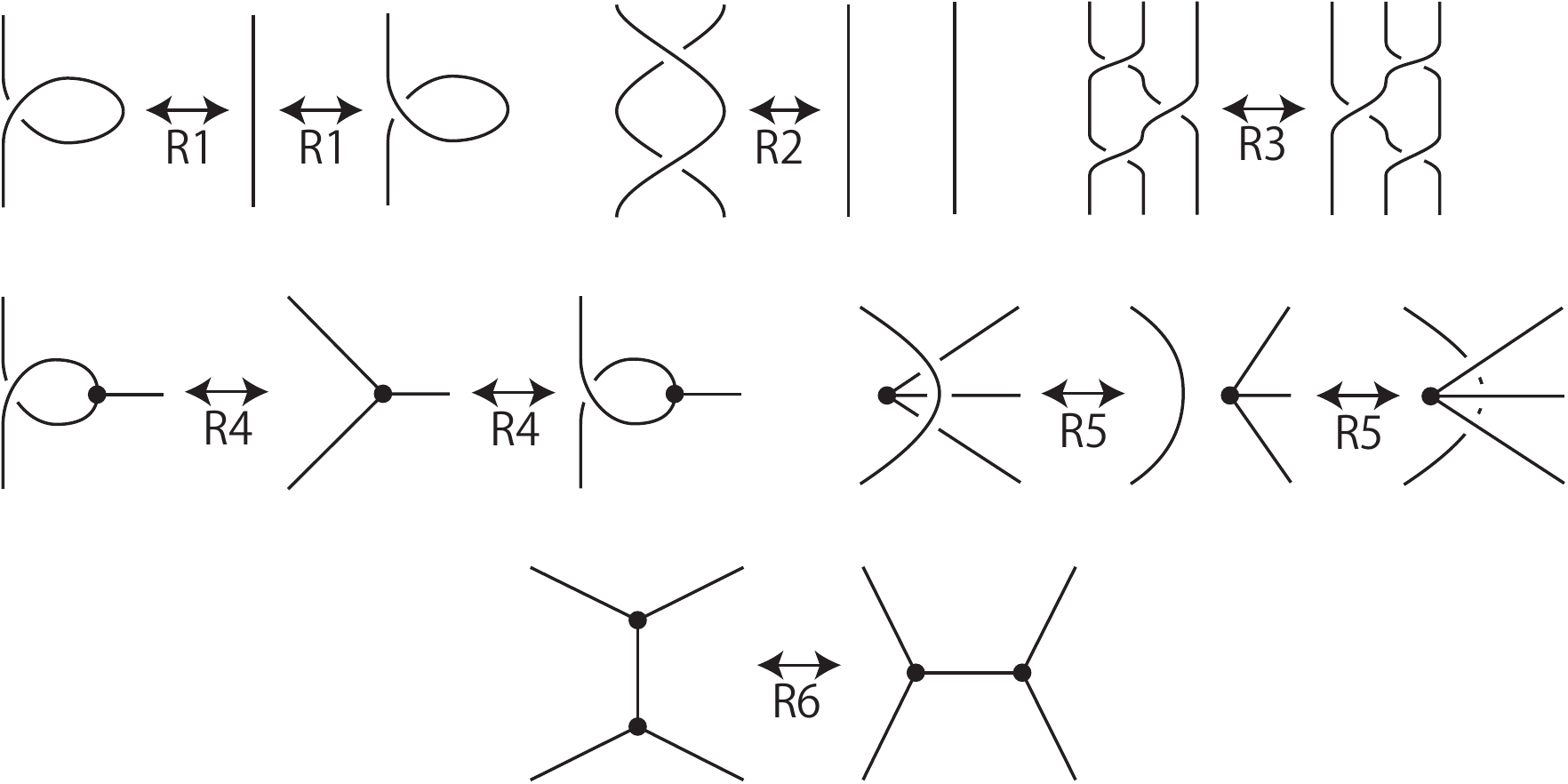}
	\end{center}
	\caption{Moves for diagrams of knotted trivalent graphs}
	\label{R1-6}
\end{figure}

\subsection{Stable equivalence of regular bridge positions}

By Lemma \ref{compatible}, we may assume that $\gamma$ and $\gamma'$ are regular by horizontal isotopies.
To prove Theorem \ref{stable1} or equivalently Theorem \ref{stable2}, it suffices to show the following theorem.

\begin{theorem}\label{stable3}
Let $\gamma$, $\gamma'$ be two regular bridge positions.
Suppose that two diagrams $\widetilde{p(\gamma)}$ and $\widetilde{p(\gamma')}$ are related by a finite sequence of the moves $R1-6$.
Then  there exist two sequences of regular bridge positions $\gamma=\gamma_1, \gamma_2, \ldots, \gamma_n=\gamma''$ and $\gamma'=\gamma'_1, \gamma'_2, \ldots, \gamma'_m=\gamma''$, where $\gamma_{i+1}$ $($resp. $\gamma'_{j+1}$$)$ is obtained from $\gamma_i$ $($resp. $\gamma'_j$$)$ by one from among stabilizations $S1$, $S2$, $B4$, $B5$ moves and a Morse isotopy $(i=1,\ldots, n-1; j=1,\ldots, m-1)$.
\end{theorem}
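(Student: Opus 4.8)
The plan is to reduce the statement about bridge positions to a statement about diagrams, carried out move by move along the given sequence of Reidemeister moves $R1$--$6$. Since $\widetilde{p(\gamma)}$ and $\widetilde{p(\gamma')}$ are related by a finite sequence $D_0 = \widetilde{p(\gamma)}, D_1, \dots, D_k = \widetilde{p(\gamma')}$, where each $D_{\ell+1}$ is obtained from $D_\ell$ by a single move $R1$--$6$, it suffices to realize each elementary diagram move ``upstairs'' at the level of regular bridge positions, but \emph{only} after both sides have been suitably stabilized. Concretely, for each $\ell$ I would produce a regular bridge position $\delta_\ell$ with $\widetilde{p(\delta_\ell)} = D_\ell$ together with, for each adjacent pair $(\ell,\ell+1)$, a common regular bridge position $\epsilon_\ell$ reached from $\delta_\ell$ by a finite sequence of $S1$, $S2$, $B4$, $B5$ and Morse isotopies, and also reached from $\delta_{\ell+1}$ by such a sequence. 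Chaining these together (pushing $\delta_0 = \gamma$ up to $\epsilon_0$, then $\delta_1$ up to $\epsilon_0$ and also up to $\epsilon_1$, and so on) yields the two sequences $\gamma = \gamma_1, \dots, \gamma_n = \gamma''$ and $\gamma' = \gamma'_1, \dots, \gamma'_m = \gamma''$ demanded by the theorem; the irreversibility of the stabilizations and of $M$-moves is not an obstacle because we only ever move \emph{upward} toward the common position $\gamma''$, never needing to destabilize.

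The heart of the argument is therefore a finite case analysis: for each of the six moves $R1$--$6$ in Figure \ref{R1-6} I must exhibit an explicit local picture showing that the corresponding change of diagram can be effected on regular bridge positions using only $S1$, $S2$, $B4$, $B5$ and Morse isotopies. First I would fix, for any diagram $D$, a ``standard'' regular bridge position realizing it: push all overstrands slightly into $B_+$ and understrands into $B_-$ near each crossing, and route each vertex so that it becomes a $Y$-vertex or $\lambda$-vertex with the correct sign. A move $R\ell$ is supported in a small disk in $S^2$, so outside that disk the bridge position is unchanged and the problem is purely local. For the moves that are Morse isotopies outright (planar isotopy, Reidemeister-II-type, the ``triangle'' move $R3$ away from vertices), the realization is a direct horizontal/vertical isotopy of the strand(s) through the ball, keeping the position regular, i.e. a Morse isotopy. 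The moves that genuinely change the Morse structure --- a Reidemeister-I curl, or a move that slides a strand past a vertex --- are where the stabilizations enter: an $R1$ curl is introduced by an $S1$ stabilization followed by a small Morse isotopy to put the new max/min and crossing in standard form, and a vertex-slide move is handled by an $S2$ stabilization near the vertex (this is exactly the role Dancso's moves 3--6 and Ishihara--Ishii's $Piv$ play) possibly composed with $B4$ or $B5$ to reposition the vertex. In each case I would draw the local before/after pictures and read off the sequence of elementary moves.

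The main obstacle I anticipate is not any single move but the bookkeeping that makes the local constructions globally consistent: after performing a stabilization to realize $R\ell$ in its supporting disk, the resulting bridge position $\epsilon_\ell$ differs from the standard realization of $D_{\ell+1}$ by the extra canceling pair(s) introduced by the stabilization, so when I come to realize $R_{\ell+1}$ I must either carry those extra features along harmlessly or absorb them --- and because destabilizations are forbidden, I cannot simply cancel them. The fix is to be generous: rather than insisting $\widetilde{p(\delta_\ell)} = D_\ell$ exactly, I allow $\delta_\ell$ to carry an accumulating collection of ``trivial'' stabilized features (small curls and near-vertex kinks) that are disjoint from all the supporting disks used later, so they never interfere; the final common position $\gamma''$ then carries the union of all such features from both sides, which is legitimate since each side can reach it by further stabilizations. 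A secondary technical point is that after each move I must re-verify the resulting position is still \emph{regular} in the sense of the definition (injectivity of $p$ on $\gamma \cap B_\pm$, only transversal double points), which is automatic for the standard local models but should be stated. Finally, by Lemma \ref{compatible} both $\gamma$ and $\gamma'$ may be assumed regular at the outset, so there is no loss of generality in the hypothesis, and this completes the reduction.
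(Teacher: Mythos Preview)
Your overall architecture matches the paper's: reduce to a move-by-move case analysis on $R1$--$R6$, realize each diagram move upstairs by stabilizations $S1$, $S2$, $B4$, $B5$ and Morse isotopies, and at the end reconcile two regular bridge positions sitting over the same diagram. But there is a genuine gap in the case analysis, and it is exactly the step the paper isolates as Lemma~\ref{intersects}.

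The problem is already visible in your claim that $R2$ and $R3$ are ``Morse isotopies outright.'' For $R2$, sliding one strand across another is a Morse isotopy only if one strand lies in $B_+$ and the other in $B_-$; if both happen to be upper arcs of the bridge position, no Morse isotopy produces the two new crossings with the required over/under data. Likewise, the reverse of $R1$ (removing a curl) fails if the relevant arc lies entirely in $B_+$ with no min/max pair to absorb. Your ``standard realization'' idea does not rescue this, because a single arc of the diagram is typically an overstrand at some crossings and an understrand at others, so it cannot be consistently assigned to $B_+$ or $B_-$. The paper's Lemma~\ref{intersects} is precisely the missing tool: after suitable vertical $S1$, $S2$ stabilizations, any crossing-free subarc of $\gamma_+$ may be assumed to be an entire component of $\gamma_-$ (and vice versa), so every strand can be placed on whichever side of the bridge sphere the next move requires. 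This lemma is invoked repeatedly in the paper's treatment of $R1$, $R2$, $R4$, $R5$, $R6$, and without it your local pictures cannot be drawn. Incidentally, you also have the roles of $R1$ and $R3$ swapped: in the paper $R1$ is a Morse isotopy (after the preparation of Lemma~\ref{intersects}), whereas $R3$ genuinely needs an $S1$ stabilization.

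Your bookkeeping worry about accumulating stabilizations is legitimate and your fix is sound, though heavier than necessary: once Lemma~\ref{intersects} is available, each $R$-move is realized by a forward-only sequence, so one does not need to build a separate common $\epsilon_\ell$ from both sides. The paper does still require a short final lemma showing that two regular bridge positions with identical diagrams become equal after further $S1$, $S2$, $B4$; you do not address this explicitly, though your chaining would absorb it if the case analysis were complete.
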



\begin{definition}
For a regular bridge position $\gamma$, both stabilizations $S1$ and $S2$ can be performed by vertical isotopies.
We call such stabilizations {\em vertical stabilizations}.
\begin{figure}[htbp]
	\begin{center}
	\includegraphics[width=0.7\textwidth,pagebox=cropbox,clip]{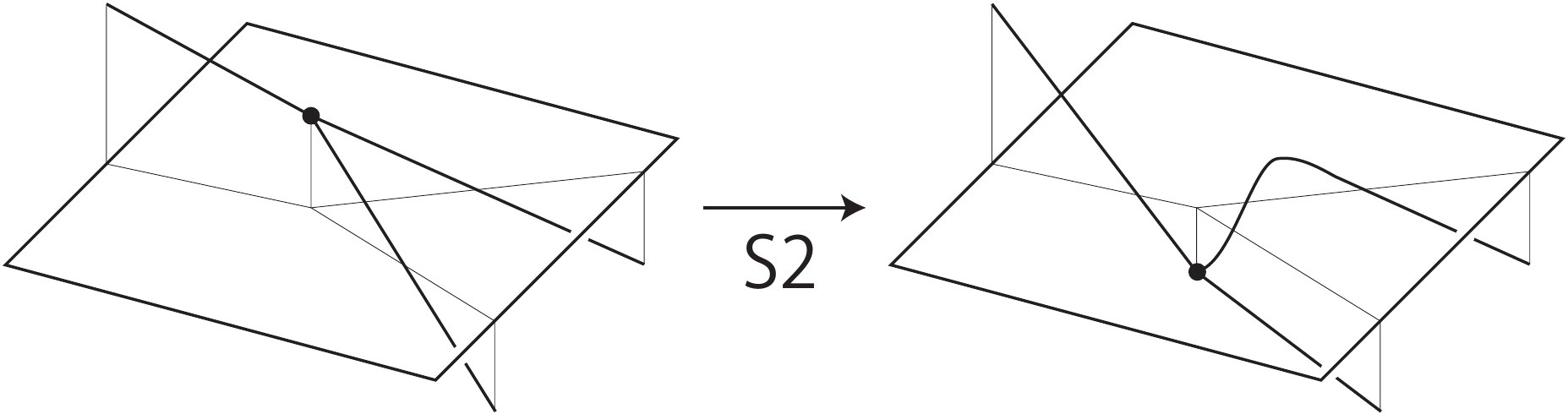}
	\end{center}
	\caption{A vertical stabilization $S2$ with a vertical canceling complex}
	\label{verticalS2}
\end{figure}
\end{definition}

\begin{lemma}\label{intersects}
Let $\gamma$ be a regular bridge position.
By a finite sequence of vertical stabilizations $S1$, $S2$ and vertical isotopies, we may assume that for any subarc $\alpha$ of $\gamma_{+}$ $($resp. $\gamma_{-}$$)$ except for all crossings of $\widetilde{p(\gamma)}$, $\alpha$ is a component of $\gamma_{-}$ $($resp. $\gamma_{+}$$)$.
\end{lemma}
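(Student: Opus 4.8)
The plan is to use the allowed moves to bring $\gamma$ into a ``doubled'' position: one in which $\gamma_+$ and $\gamma_-$ run alongside one another everywhere outside small disks about the crossings of $\widetilde{p(\gamma)}$, so that each component of $\gamma_+$ has a component of $\gamma_-$ running parallel to it, and conversely. Since $\gamma$ is a regular bridge position, $p|_{\gamma_\pm}$ is injective; hence $p(\gamma_+)$ and $p(\gamma_-)$ are embedded forests in $S^2$, each component of $\gamma_+$ (resp.\ $\gamma_-$) is either an arc carrying a single maximum (resp.\ minimum) or a tree carrying $\lambda$-vertices (resp.\ $Y$-vertices), every crossing of $\widetilde{p(\gamma)}$ is a transverse intersection of $p(\gamma_+)$ with $p(\gamma_-)$, and $p(\gamma_+)\cap p(\gamma_-)$ consists of these crossings together with the points $p(\gamma\cap S^2)$. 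I would fix a thin collar $R$ of $p(\gamma)$ inside which all the constructions below are carried out, together with small, pairwise disjoint disks $D_1,\dots,D_k\subset R$ about the crossings.

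I would then treat the components of $\gamma_+$ one at a time, and afterwards those of $\gamma_-$ by the mirror argument. For an arc component $T$ of $\gamma_+$: a vertical $S1$ along a one-sided push-off of $p(T)$ inside $R$, as in Figure~\ref{S123}, introduces a partner arc of $\gamma_-$ which runs just under $T$ along its length, stopping short of the two endpoints of $T$ and skirting each disk $D_i$ met by $p(T)$. Taking the push-off always on one fixed side keeps $p|_{\gamma_-}$ injective and keeps the partner clear of everything built at earlier steps. For a component $T$ of $\gamma_+$ carrying $\lambda$-vertices, I would combine a vertical $S2$ at each vertex of $T$ --- which, as in Figure~\ref{verticalS2} and its vertical canceling complex, plants a parallel $Y$-vertex just under $T$ in $B_-$ --- with the arc construction along each edge of $T$, and then consolidate the new pieces, by vertical isotopies, into a single partner tree of $\gamma_-$ parallel to $T$. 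Running through all components of $\gamma_+$ and then all components of $\gamma_-$, we obtain a regular bridge position in which, outside $D_1\cup\dots\cup D_k$, every component of $\gamma_+$ is paralleled by a component of $\gamma_-$ and conversely; after shrinking the $D_i$ this is the assertion of the lemma.

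Finiteness is automatic, as $\gamma$ has only finitely many components, edges, $\lambda$- and $Y$-vertices, and crossings. The step I expect to be the real obstacle is the routing above: one must verify that each partner --- an arc, or, near a vertex, a tripod --- can always be carried by a vertical isotopy into its parallel position while (i) keeping both $p|_{\gamma_+}$ and $p|_{\gamma_-}$ injective, (ii) avoiding all partners produced at earlier stages, and (iii) remaining a regular bridge position throughout. The devices that make this work are built into the set-up: confining everything to the fixed collar $R$, taking all push-offs to one fixed side, processing the unhandled components outward from those already done, and treating $D_1,\dots,D_k$ as forbidden regions --- which is exactly why the conclusion has to except the crossings. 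The vertex case, where the partner is a tree and one must see that the outputs of the several $S2$'s and $S1$'s assemble into a single parallel tree with no new intersections, is the delicate point, and there everything rests on the precise shape of the vertical $S2$ move of Figure~\ref{verticalS2}.
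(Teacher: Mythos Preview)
Your reading of the lemma and of the moves is off in two ways, and this makes the proposal diverge from what is actually being claimed.

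First, the lemma is a \emph{local} statement about a single prescribed subarc $\alpha$, not a global ``doubling'' of $\gamma$. The conclusion ``$\alpha$ is a component of $\gamma_-$'' means that after the moves the very subarc $\alpha$ now lies in $B_-$ as a lower bridge; this is exactly how the lemma is invoked later (``we may assume that any subarc in a given edge intersects $S^2$'', ``we may assume $e_4$ lies in $B_+$'', etc.). Your plan instead leaves $\alpha$ in $B_+$ and builds a separate parallel piece nearby in $B_-$, which is a different conclusion and not what the applications use.

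Second, the moves $S1$ and $S2$ do not ``introduce partner arcs'' or ``plant a parallel $Y$-vertex''; they modify $\gamma$ itself. A vertical $S1$ at a point of $e\cap S^2$ creates a small lower bridge \emph{on the edge $e$}, and a vertical $S2$ at a $\lambda$-vertex converts that same vertex into a $Y$-vertex in $B_-$ (with a new maximum above it). There is no separate copy being produced; a piece of $\gamma$ is being pushed across $S^2$. Consequently your phrase ``a vertical $S1$ along a one-sided push-off of $p(T)$'' does not correspond to any allowed move.

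The paper's argument is: let $e$ be the edge containing $\alpha$. If $e$ meets $S^2$, perform $S1$ at a point of $e\cap S^2$ and, by a vertical isotopy, slide the new small lower bridge along $e$ until $\alpha$ itself sits in $B_-$. If $e$ lies entirely in $B_+$, take the \emph{lowest} $\lambda$-vertex $x$ of the component of $\gamma_+$ containing $e$; its two lower edges do meet $S^2$, so after $S1$-moves clearing crossings from their portions near $x$ one can perform a vertical $S2$ at $x$, which pushes $x$ into $B_-$ and makes the upper edge at $x$ meet $S^2$. Iterating from the bottom up eventually forces $e$ to meet $S^2$, reducing to the first case. The essential point your proposal misses is this ordering: a vertical $S2$ is only available once both lower legs reach $S^2$, so one cannot simply perform $S2$ at every $\lambda$-vertex of $T$ and then ``consolidate''; the vertices must be processed from the lowest upward, and each $S2$ changes the component so that the next vertex becomes eligible.
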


\begin{proof}
Without loss of generality, let $\alpha$ be a subarc of $\gamma_+$ except for all crossings of $\widetilde{p(\gamma)}$.
Let $e$ be an edge of $\gamma$ containing $\alpha$.

First suppose that $e$ intersects the bridge sphere $S^2$.
As shown in Figure \ref{deform0}, we perform a vertical stabilization $S1$ at the point of $e\cap S^2$, and then two small upper bridge and lower bridge are created.
This lower bridge can be moved into the below of $\alpha$ by a vertical isotopy.
Hence $\alpha$ is a component of $\gamma_{-}$.

\begin{figure}[htbp]
	\begin{center}
	\includegraphics[width=0.8\textwidth,pagebox=cropbox,clip]{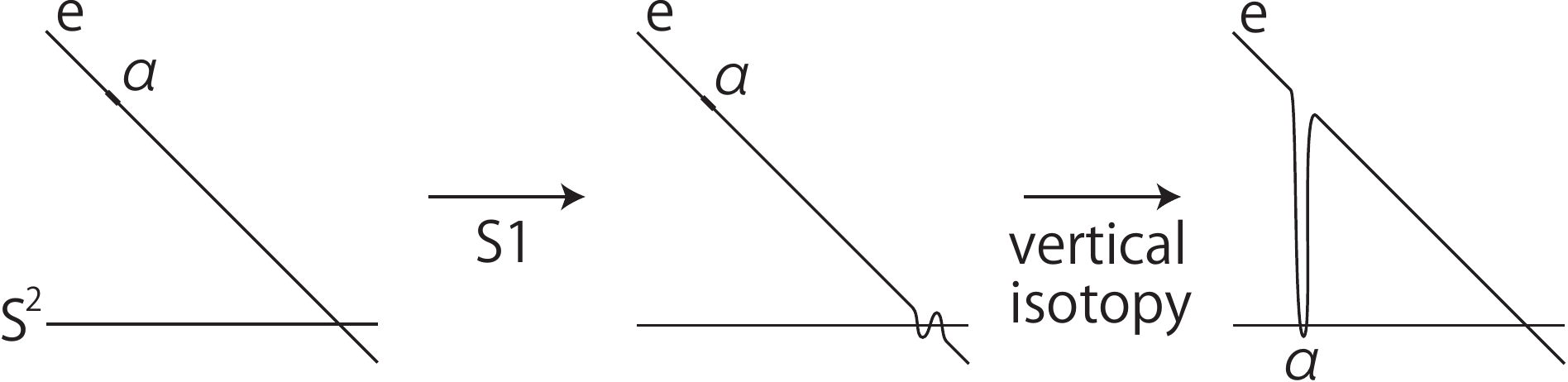}
	\end{center}
	\caption{A deformation of $\alpha$ by a stabilization $S1$ and a vertical isotopy}
	\label{deform0}
\end{figure}

Next suppose that the edge $e$ does not intersect the bridge sphere $S^2$.
In the component of $\gamma_+$ containing $e$, let $x$ be the lowest $\lambda$-vertex.
Let $e_1, e_2, e_3$ be the edges incident to $x$ such that $e_1$ lies above $x$ and $e_2, e_3$ lie below $x$, Since $x$ is the lowest $\lambda$-vertex, both of $e_2$ and $e_3$ intersect the bridge sphere $S^2$.

Put $e_2'\subset e_2$ and $e_3'\subset e_3$ as portions of the component of $\gamma_+$ containing $e$.
First we perform a vertical stabilization $S1$ at the endpoint of $e_2'$ on $S^2$.
Next by a vertical isotopy, we have $p(e_2')$ does not intersect $p(\gamma_-)$, that is, $p(e_2')$ has no crossing of $\widetilde{p(\gamma)}$.
See Figure \ref{deform1}.
The same applies to $e_3'$.

\begin{figure}[htbp]
	\begin{center}
	\includegraphics[width=0.8\textwidth,pagebox=cropbox,clip]{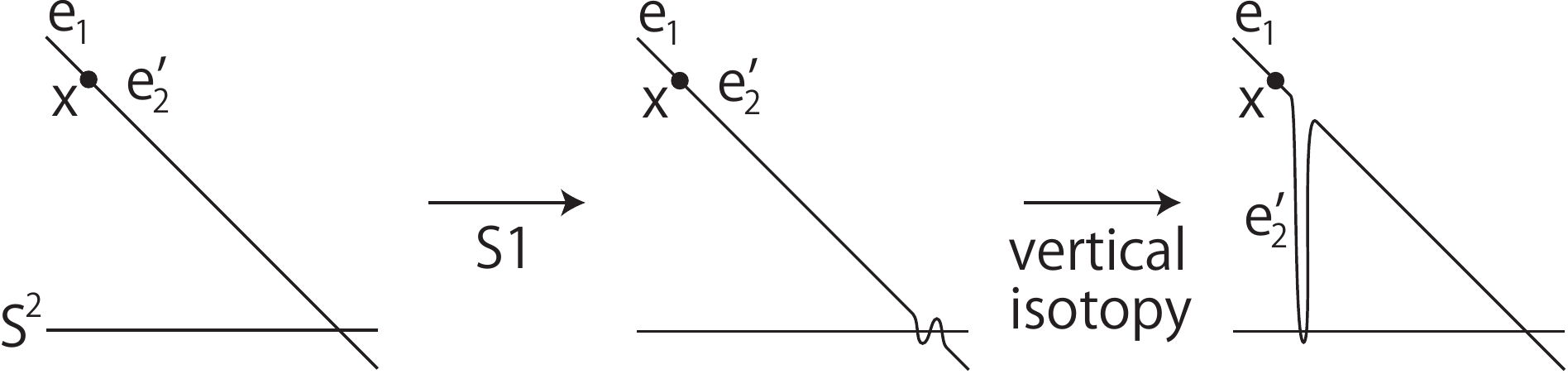}
	\end{center}
	\caption{A deformation of $e_2'$ by a stabilization $S1$ and a vertical isotopy}
	\label{deform1}
\end{figure}

Since both of $p(e_2')$ and $p(e_3')$ have no crossing of $\widetilde{p(\gamma)}$, we can perform a vertical stabilization $S2$ around $x$.
After a vertical stabilization $S2$ around $x$, $e_1$ intersects the bridge sphere $S^2$.
See Figure \ref{verticalS2}.

By repeating these operations, the edge $e$ eventually intersects the bridge sphere $S^2$.
Then the argument follows the previous case.
\end{proof}

By Theorem \ref{R}, two diagrams $\widetilde{p(\gamma)}$ and $\widetilde{p(\gamma')}$ are related by a finite sequence of the moves $R1-6$.

\begin{lemma}\label{Reidemeister}
A finite sequence of stabilizations $S1$, $S2$, $B4$, $B5$ moves and Morse isotopies can be substituted for the finite sequence of the moves $R1-6$ and planar isotopies.
\end{lemma}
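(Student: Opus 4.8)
The plan is to handle the moves one at a time. Each of $R1$--$R6$ and each planar isotopy is supported in a disk $D\subset S^2$, and a finite composition of bridge-level realizations is again a finite sequence of the allowed moves, so it suffices to treat a single move $\widetilde{p(\gamma)}\to\widetilde{p(\gamma^*)}$ and produce a regular bridge position $\gamma^*$, reachable from $\gamma$ by stabilizations $S1$, $S2$, $B4$, $B5$ moves and Morse isotopies, with $\widetilde{p(\gamma^*)}$ equal to the target up to planar isotopy. I would begin by applying Lemma \ref{intersects} to put $\gamma$ into the normal form in which, away from the crossings, every arc of $\gamma_+$ (resp.\ $\gamma_-$) is a short monotone bridge lying just above (resp.\ below) the bridge sphere; in particular $\gamma\cap p^{-1}(D)$ becomes one of finitely many standard local pictures. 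A planar isotopy of $D$ then lifts, in the coordinates $r: S^3-\{\pm\infty\}\to S^2\times(-1,1)$, to a horizontal isotopy of $S^3$ that moves $\gamma_+$ within $B_+$ and $\gamma_-$ within $B_-$ while fixing all heights; such an isotopy is automatically a Morse isotopy, so planar isotopies cost nothing.

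For $R2$ and $R3$ the arcs that are pushed across a crossing or across the triangular region of $R3$ are short monotone bridges in $\gamma_\pm$, and sliding such a bridge creates and destroys no critical point and preserves the $\lambda$/$Y$ and distinct-values conditions; hence it is a Morse isotopy. The one subtlety is that $\widetilde{p(\gamma)}$ is a very constrained diagram -- at every crossing the over-strand lies in $\gamma_+$ -- so a move may demand a crossing whose over-strand is currently an arc of $\gamma_-$, or conversely. In that case I would first perform a stabilization $S1$ on that arc near $S^2$, creating a short local maximum just above $S^2$ (equivalently introducing two new intersections with $S^2$), and then slide; the result is again a regular bridge position. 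The move $R1$ is realized by a single $S1$ stabilization on the relevant edge, creating the maximum and minimum pair straddling $S^2$, followed by a Morse isotopy that wraps the new piece into the kink; the sign of the crossing and which strand lies over are fixed by whether the new extremum is placed in $B_+$ or $B_-$.

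The vertex moves $R4$, $R5$, $R6$ occur near a vertex $v$, which in a bridge position is a $\lambda$-vertex in $B_+$ or a $Y$-vertex in $B_-$. I would first apply a vertical $S2$ around $v$, as in the proof of Lemma \ref{intersects}, so that the edges incident to $v$ meet $S^2$ and the move is supported in a standard neighborhood of $v$ near the bridge sphere. The reconfigurations of such a neighborhood that arise are exactly the $B4$ and $B5$ moves -- which, by the discussion following Figure \ref{B45}, are Morse isotopies for handlebody-knots -- together with, when an over/under strand must be re-routed, a further $S1$ and a Morse isotopy. In particular $R6$, the move accounting for the difference between ambient isotopy of the spatial graph and neighborhood equivalence, is realized by $B4$, $B5$ moves and a Morse isotopy, matching the fact noted after Figure \ref{B45} that $B5$ is Ishihara--Ishii's move $Rvi$.

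Two points need care. First, $R1$--$R6$ form an undirected relation whereas stabilizations are irreversible, so when the given sequence traverses a move in the destabilizing direction we cannot simply run the realization backwards; instead we realize that step by going forward with the allowed moves from \emph{both} of its two diagrams to a common regular bridge position (for $R2$, $R3$ and the non-stabilizing cases no stabilization is used and this is automatic, while in the stabilizing cases one performs the relevant $S1$ or $S2$ on each side). This is the source of the two sequences appearing in Theorem \ref{stable3}. Second -- and this is the main obstacle -- one must verify at every step that the output really is a \emph{regular} bridge position: all maxima and $\lambda$-vertices above $S^2$, all minima and $Y$-vertices below, $p|_{\gamma_\pm}$ injective and $p|_\gamma$ regular. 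The delicate part is precisely the over/under rigidity of $\widetilde{p(\gamma)}$: before any move that would violate it one must pre-lift an arc by a stabilization, and one must check that the normal form of Lemma \ref{intersects} always leaves room for such a pre-lift and that it disturbs no other crossing of the diagram.
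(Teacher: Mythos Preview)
Your plan is the same as the paper's: reduce to a single local move, invoke Lemma \ref{intersects} to normalize the bridge position near the move, and then realize each $R$-move by the allowed operations. The paper carries this out move by move with explicit figures, and the details differ from your sketch in a few places worth noting.

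For $R1$ the paper uses only a Morse isotopy, not an $S1$: the kink is produced by twisting an existing bridge, and the reverse direction is handled after Lemma \ref{intersects} guarantees the edge already meets $S^2$. For $R3$ the paper \emph{does} use a stabilization $S1$ (contrary to your parenthetical that none is used there): the middle strand is first stabilized so it can be slid across the triangle by a Morse isotopy. For $R5$ the paper uses no $S2$; after the deformation of Figure \ref{deform0} and a vertical isotopy the move is a Morse isotopy.

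The main place your outline is too coarse is $R4$ and $R6$. For $R4$ the paper does a three-case analysis according to which of the incident edges lies above the $\lambda$-vertex: two cases are pure Morse isotopies, while the remaining case uses a $B4$ move in one direction and a vertical $S2$ in the reverse direction. Your blanket ``apply $S2$ around $v$, then the reconfigurations are exactly $B4$, $B5$'' does not capture this; in particular $B5$ plays no role in $R4$. For $R6$ the paper's mechanism is specific: one alternates $B4$ moves (sliding the vertex past a maximum of $e_1$) with vertical $S2$'s to reduce $|e_1\cap S^2|$ until $e_1$ lies entirely on one side of $S^2$, and only then applies a single $B5$. Your plan is sound, but the content of the proof is precisely this case-by-case verification, and your treatment of the vertex moves would need to be worked out along the lines above.
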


\begin{proof}
$(R1)$ Figure \ref{R1} shows that Morse isotopies can be substituted for one direction of $R1$.

\begin{figure}[htbp]
	\begin{center}
	\includegraphics[width=0.3\textwidth,pagebox=cropbox,clip]{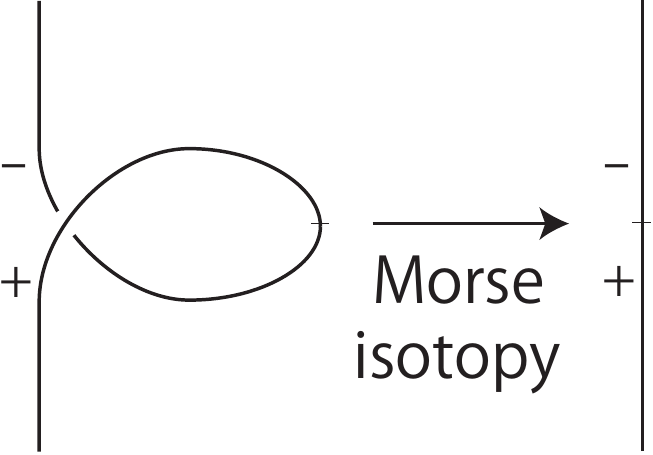}
	\end{center}
	\caption{Morse isotopies can be substituted for $R1$.}
	\label{R1}
\end{figure}

For another direction of $R1$, by Lemma \ref{intersects}, we may assume that any subarc in a given edge intersects the bridge sphere $S^2$.
Then, by the converse deformation of Figure \ref{R1}, Morse isotopies are substituted for another direction of $R1$.

$(R2)$ Following a deformation as shown in Figure \ref{deform0}, we may assume that any subarc does not intersect the bridge sphere $S^2$.
Then Figure \ref{R2} shows that Morse isotopies can be substituted for one direction of $R2$.

\begin{figure}[htbp]
	\begin{center}
	\includegraphics[width=0.4\textwidth,pagebox=cropbox,clip]{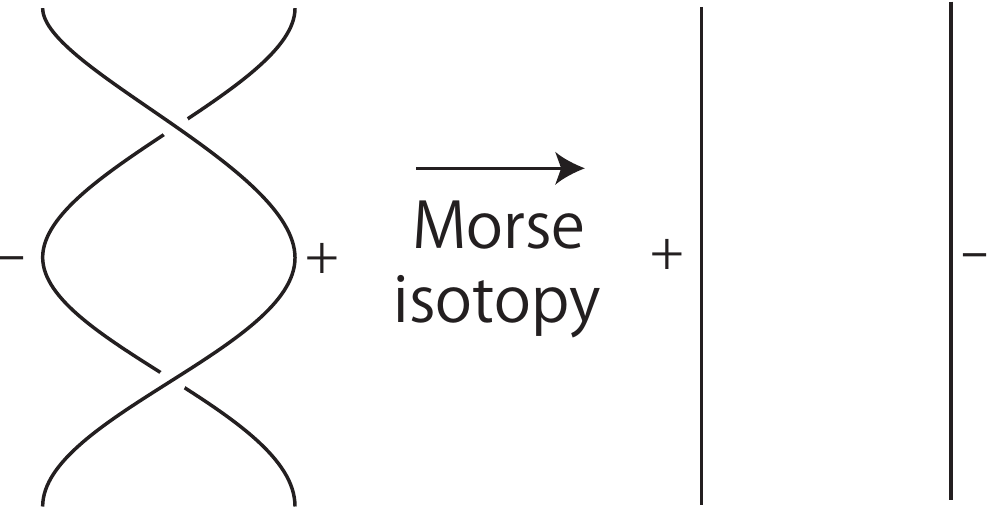}
	\end{center}
	\caption{Morse isotopies can be substituted for $R2$.}
	\label{R2}
\end{figure}

For another direction of $R2$, by Lemma \ref{intersects}, we may assume that any two subarcs lie in $B_+$ and $B_-$ respectively. 
Then, by the converse deformation of Figure \ref{R2}, Morse isotopies are substituted for another direction of $R2$.

$(R3)$ Figure \ref{R3} shows that a stabilization $S1$ and Morse isotopies can be substituted for one direction of $R3$.
The same applies to another direction of $R3$.

\begin{figure}[htbp]
	\begin{center}
	\includegraphics[width=0.8\textwidth,pagebox=cropbox,clip]{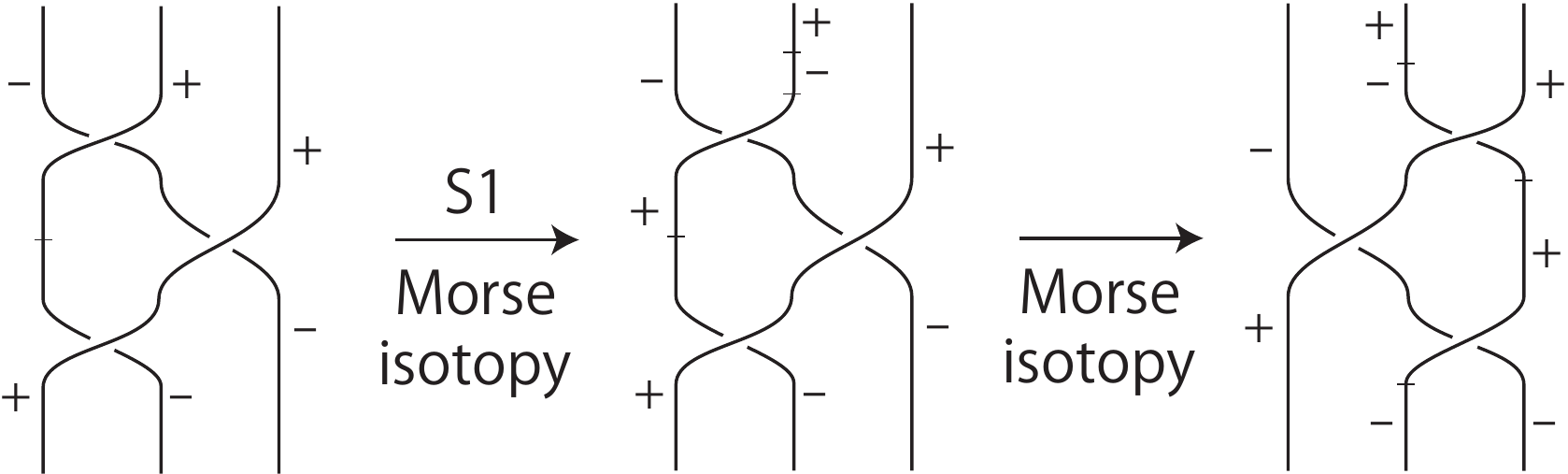}
	\end{center}
	\caption{A stabilization $S1$ and Morse isotopies can be substituted for $R3$.}
	\label{R3}
\end{figure}

$(R4)$ We consider one direction of $R4$ as shown in Figure \ref{R4}, where around the vertex $x$, two edges $e_1$ and $e_2$ contain over and under crossings respectively, $e_3$ contains no crossing.
Following a deformation as shown in Figure \ref{deform0}, we may assume that around the vertex $x$, only $e_2$ intersects the bridge sphere $S^2$.

\begin{figure}[htbp]
	\begin{center}
	\includegraphics[width=0.55\textwidth,pagebox=cropbox,clip]{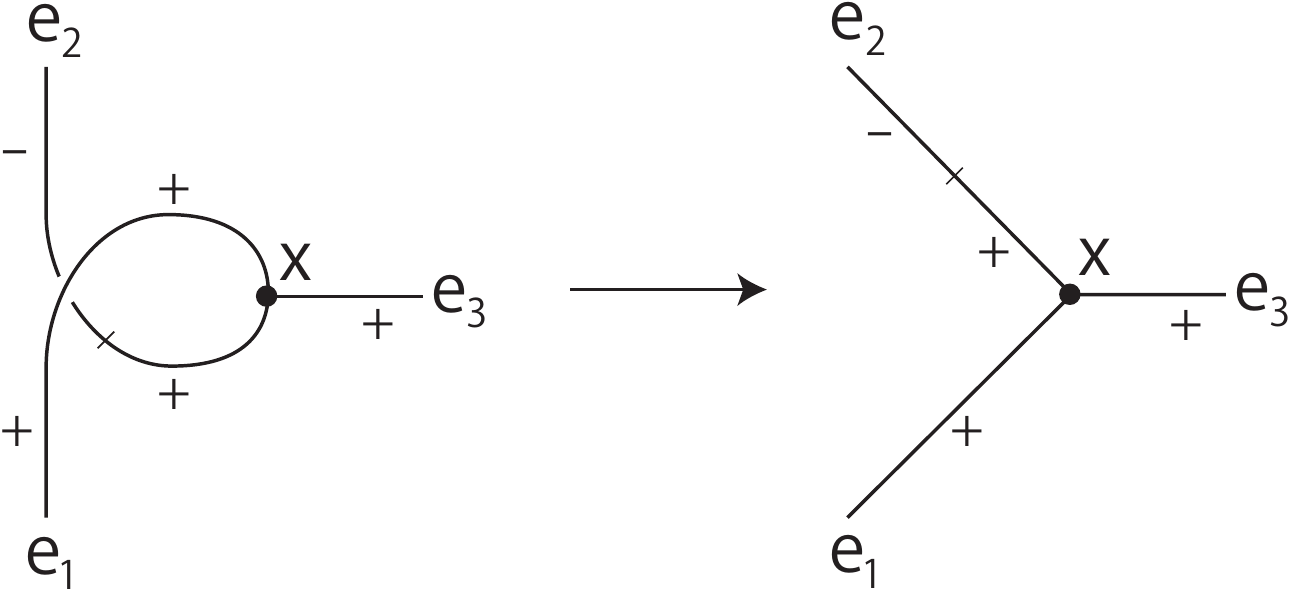}
	\end{center}
	\caption{One direction of $R4$}
	\label{R4}
\end{figure}

Since $x$ is a $\lambda$-vertex, there are three cases for consideration.
\begin{description}
\item[Case 1] One end of $e_1$ lies above $x$ and two ends of $e_2, e_3$ lie below $x$.
\item[Case 2] One end of $e_2$ lies above $x$ and two ends of $e_1, e_3$ lie below $x$.
\item[Case 3] One end of $e_3$ lies above $x$ and two ends of $e_1, e_2$ lie below $x$.
\end{description}

In Case 1, around the vertex $x$, $e_1$ lies entirely above $e_2$.
Hence one direction of $R4$ can be realized by a Morse isotopy.

In Case 3, around the vertex $x$, we may assume that $e_1$ lies entirely above $e_2$ by a Morse isotopy.
Then one direction of $R4$ can be realized by a Morse isotopy.

In Case 2, $e_2$ must have a maximal point, say $y$.
By a $B4$ move, we slide $e_3$ beyond $y$.
Then the situation is similar to Case 1, and one direction of $R4$ can be realized by a Morse isotopy.
See Figure \ref{R4B4}.

\begin{figure}[htbp]
	\begin{center}
	\includegraphics[width=0.8\textwidth,pagebox=cropbox,clip]{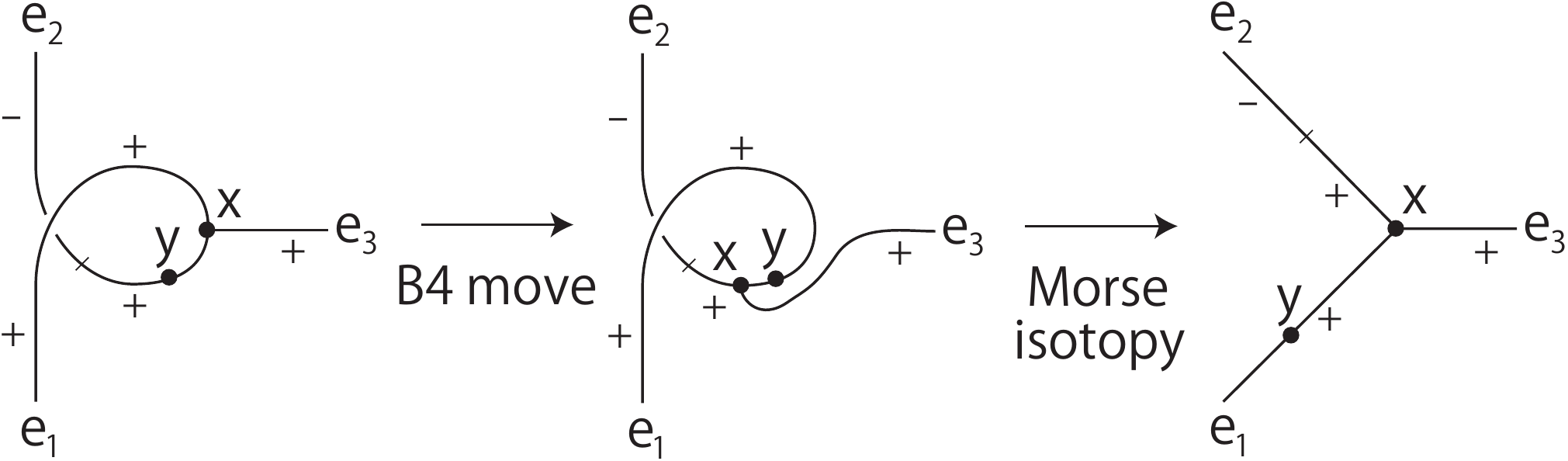}
	\end{center}
	\caption{One direction of $R4$}
	\label{R4B4}
\end{figure}

Next we consider the reverse direction of $R4$ as shown in Figure \ref{R4}.
Since $x$ is a $\lambda$-vertex, there are three cases for consideration.
\begin{description}
\item[Case 1] One end of $e_1$ lies above $x$ and two ends of $e_2, e_3$ lie below $x$.
\item[Case 2] One end of $e_2$ lies above $x$ and two ends of $e_1, e_3$ lie below $x$.
\item[Case 3] One end of $e_3$ lies above $x$ and two ends of $e_1, e_2$ lie below $x$.
\end{description}

In Case 1, around the vertex $x$, $e_1$ lies entirely above $e_2$.
Hence the reverse direction of $R4$ can be realized by a Morse isotopy.

In Case 3, around the vertex $x$, we may assume that $e_1$ lies entirely above $e_2$ except for $x$ by a Morse isotopy.
Then the reverse direction of $R4$ can be realized by a Morse isotopy.

In Case 2, by Lemma \ref{intersects}, we may assume that around the vertex $x$, $e_1$ and $e_3$ intersect the bridge sphere $S^2$.
By a vertical stabilization $S2$, $x$ becomes a $Y$-vertex, $e_1$ intersects $S^2$ in two points, $e_2$ intersects $S^2$ in one point and $e_3$ does not intersect $S^2$.
Then the reverse direction of $R4$ can be realized by a Morse isotopy.
See Figure \ref{R4S2}.

\begin{figure}[htbp]
	\begin{center}
	\includegraphics[width=0.8\textwidth,pagebox=cropbox,clip]{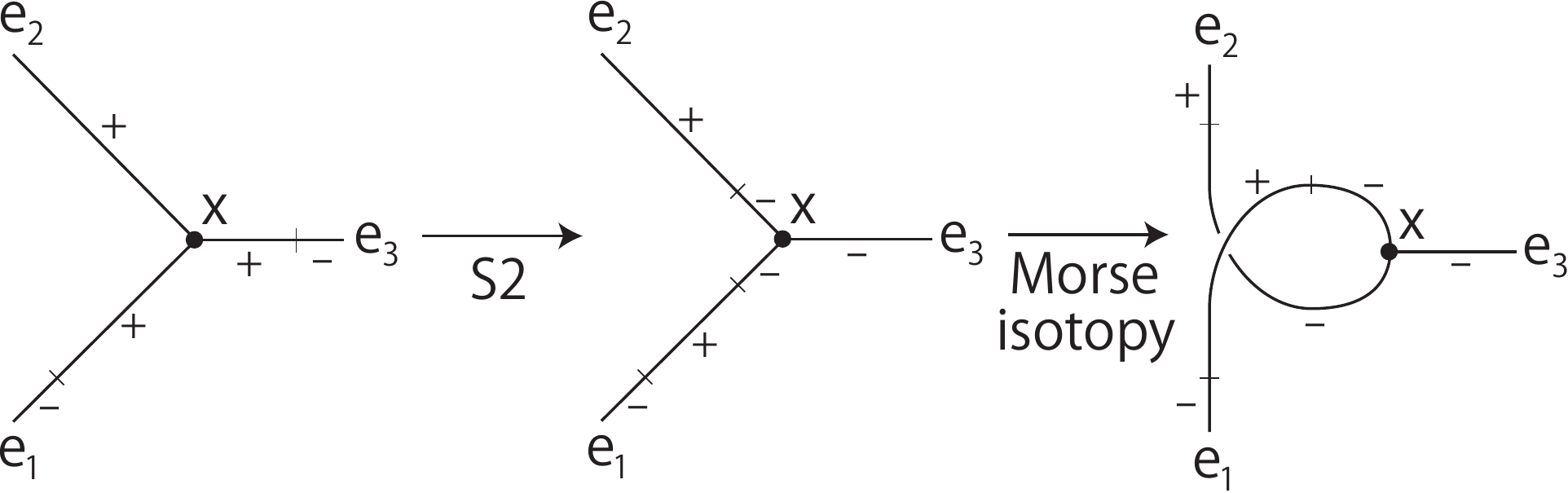}
	\end{center}
	\caption{The reverse direction of $R4$}
	\label{R4S2}
\end{figure}

$(R5)$ Following a deformation as shown in Figure \ref{deform0}, we may assume that $e_4$ does not intersect the bridge sphere $S^2$.
Then by a vertical isotopy, we may assume that $e_4$ lies above $e_1, e_2, e_3$, and one direction of $R5$ can be realized by a Morse isotopy as shown in Figure \ref{R5}.

\begin{figure}[htbp]
	\begin{center}
	\includegraphics[width=0.45\textwidth,pagebox=cropbox,clip]{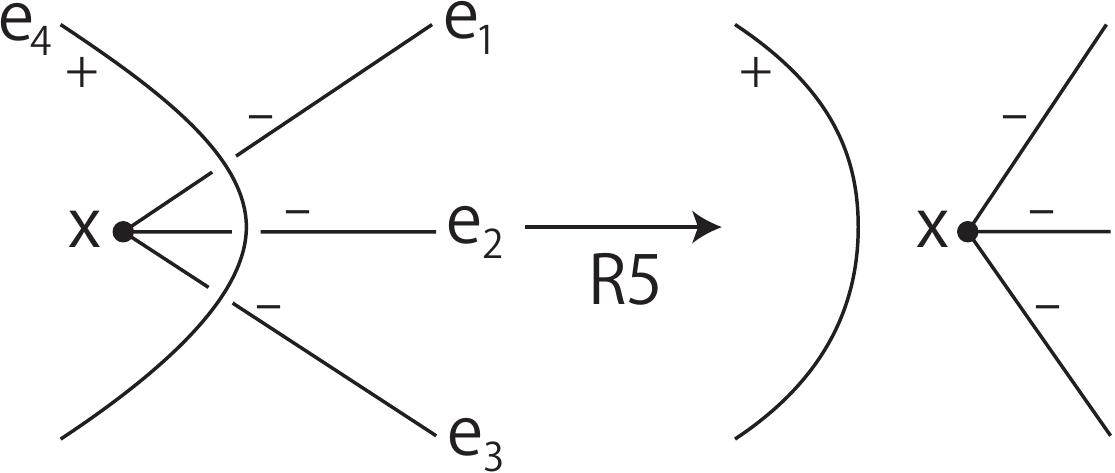}
	\end{center}
	\caption{One direction of $R5$.}
	\label{R5}
\end{figure}

For another direction of $R5$, by Lemma \ref{intersects}, we may assume that $e_4$ lies in $B_+$ and lies above $e_1, e_2, e_3$ by a vertical isotopy.
Then, the coverse direction of $R5$ can be realized by a Morse isotopy.

$(R6)$ We label each edges and vertices as shown in Figure \ref{R6}.
We note that $p(e_1)$ does not contain any crossing of $\widetilde{p(\gamma)}$.
First we consider the case that $e_1$ does not intersect the bridge sphere $S^2$, and $x_1, x_2$ lies in $B_+$.
If $e_1$ has a maximal point, then by a $B4$ move, we may assume that $e_1$ does not have a maximal point.
Then one direction of $R6$ can be realized by a $B5$ move.

\begin{figure}[htbp]
	\begin{center}
	\includegraphics[width=0.55\textwidth,pagebox=cropbox,clip]{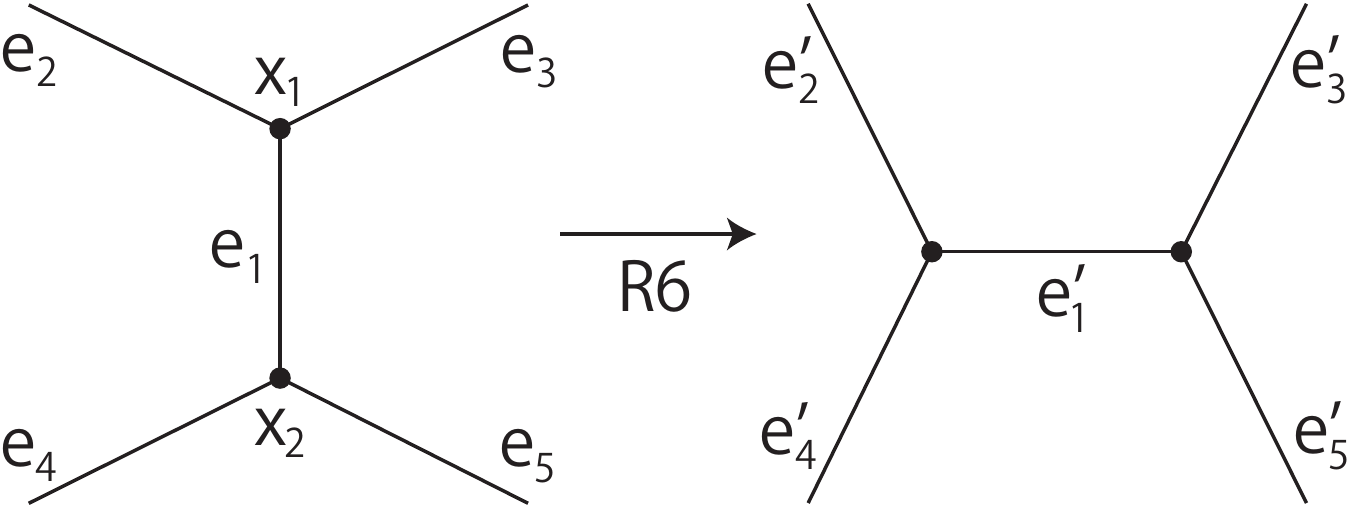}
	\end{center}
	\caption{One direction of $R6$}
	\label{R6}
\end{figure}

Next we consider the case that $e_1$ intersects the bridge sphere $S^2$.
By repeating a stabilization $S2$ and a $B4$ move alternatively, $e_1$ eventually does not intersect $S^2$ as follows.
Then as the previous case, one direction of $R6$ can be realized by a $B5$ move.

To show that the intersection of $e_1$ and $S^2$ can be reduced by repeating a stabilization $S2$ and a $B4$ move alternatively, we may assume without loss of generality that $x_2$ lies in $B_+$.
First we consider the case that the portion of $e_1$ which incidents to $x_2$ contains a maximal point $y$.
Then by a $B4$ move, we slide $e_4$ so that the portion of $e_1$ does not contain $y$, but instead, $e_5$ contains a maximal point.
See Figure \ref{R6B4}.

\begin{figure}[htbp]
	\begin{center}
	\includegraphics[width=0.55\textwidth,pagebox=cropbox,clip]{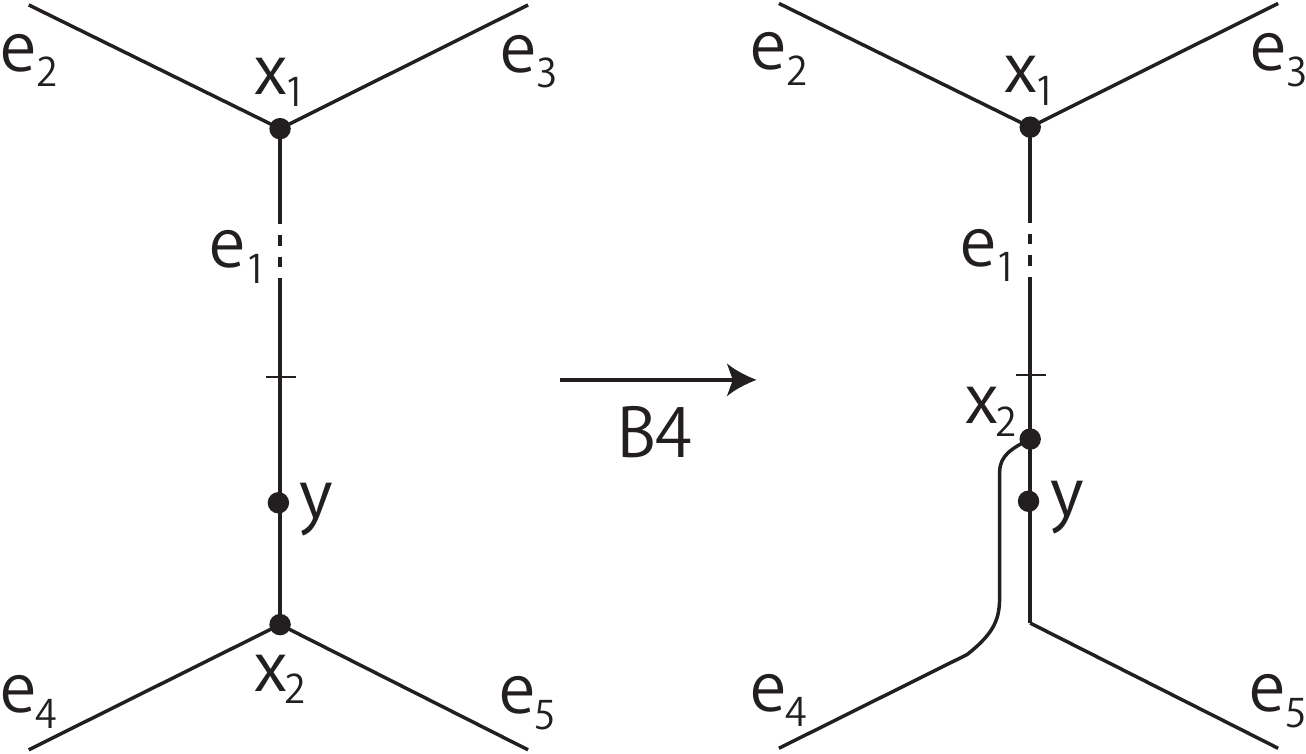}
	\end{center}
	\caption{A $B4$ move}
	\label{R6B4}
\end{figure}

Next we consider the case that the portion of $e_1$ which incidents to $x_2$ does not contain a maximal point.
We may assume without loss of generality that $e_5$ contains a maximal point $y$ and following a deformation as shown in Figure \ref{deform0}, $e_4$ intersects the bridge sphere $S^2$ around $x_2$.
By a vertical stabilization $S2$, the number of the intersection of $e_1$ and $S^2$ can be reduced by one.
See Figure \ref{R6S2}.

\begin{figure}[htbp]
	\begin{center}
	\includegraphics[width=0.55\textwidth,pagebox=cropbox,clip]{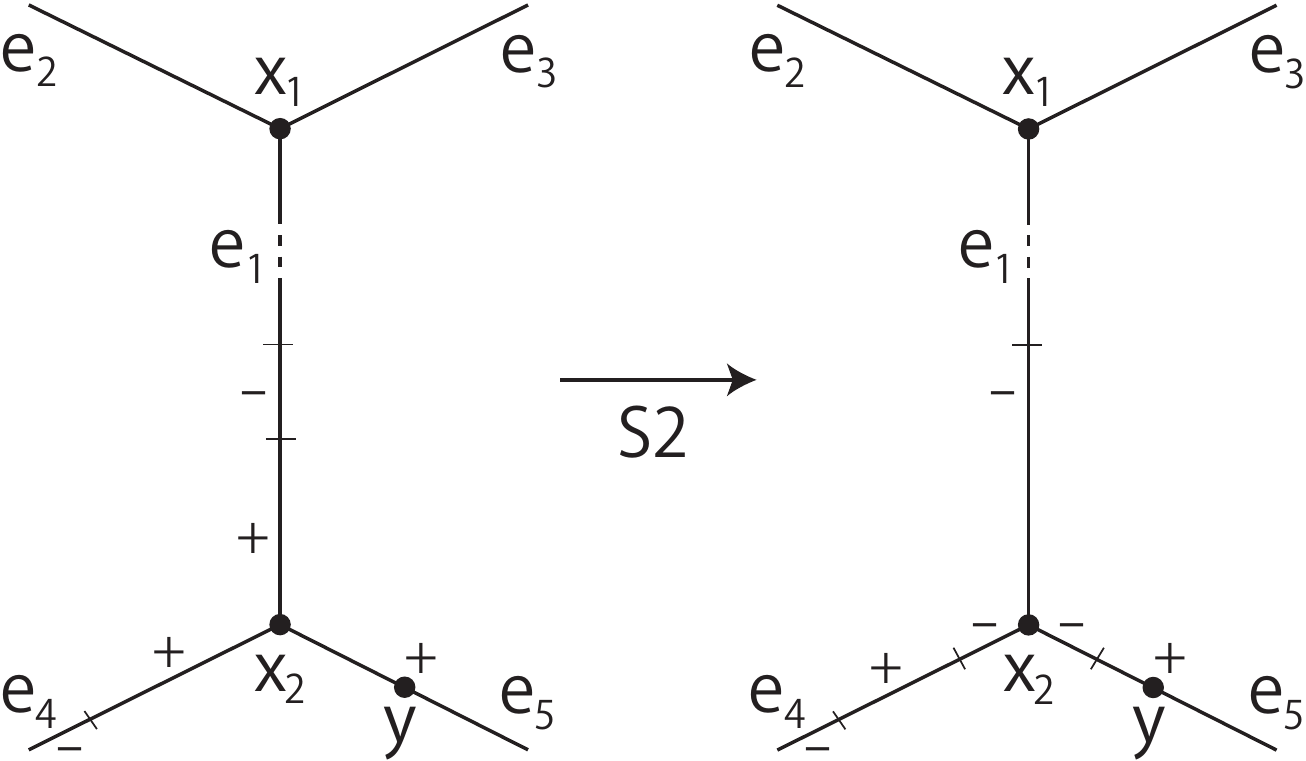}
	\end{center}
	\caption{A stabilization $S2$}
	\label{R6S2}
\end{figure}

This completes the proof of Lemma \ref{Reidemeister}.
\end{proof}

By Lemma \ref{Reidemeister}, $\gamma$ and $\gamma'$ are same as diagrams.
However, $\gamma$ and $\gamma'$ are not necessarily same as regular bridge positions.

\begin{lemma}
Let $\gamma$ and $\gamma'$ be regular bridge positions such that $\widetilde{p(\gamma)}=\widetilde{p(\gamma')}$.
Then by a finite sequence of vertical stabilizations $S1$, $S2$ and $B4$ moves, we have $\gamma=\gamma'$.
\end{lemma}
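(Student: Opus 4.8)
The plan is to isolate the finitely many combinatorial data by which two regular bridge positions carrying the same diagram can differ, and then to kill every discrepancy by over--stabilizing both positions to a common normal form. Write $D:=\widetilde{p(\gamma)}=\widetilde{p(\gamma')}$. Since $\gamma$ and $\gamma'$ are regular, $p|_{\gamma_\pm}$ is injective, so every crossing of $D$ is a point where an arc of $\gamma_+$ crosses over an arc of $\gamma_-$, and the splitting $D=p(\gamma_+)\cup p(\gamma_-)$ is determined by $D$ together with: (i) the location of the cut points $\gamma\cap S^2$ (equivalently, which portion of each edge lies in $B_+$ and which in $B_-$), and (ii) for each vertex $v$, whether $v$ is a $\lambda$-vertex or a $Y$-vertex. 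Conversely, once (i) and (ii) are fixed, the isotopy type of $\gamma$ through regular bridge positions is determined: each component of $\gamma_+$ (resp. $\gamma_-$) is an embedded tree in the ball $B_+$ (resp. $B_-$) all of whose critical points are maxima (resp. minima), with only $\lambda$- (resp. $Y$-)vertices and injective projection, so it admits a monotone vertical canceling complex (as produced in Lemma \ref{compatible}) and is therefore vertically isotopic, fixing $\partial\gamma_\pm\subset S^2$, into a fixed collar of $S^2$ where it runs just off its own projection. Hence it suffices to bring the data (i) and (ii) of $\gamma$ and of $\gamma'$ into agreement using only vertical $S1$, $S2$ and $B4$.

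First I would standardize (ii). For each $\lambda$-vertex $v$ of $\gamma$ I perform a vertical stabilization $S2$ near $v$ to turn $v$ into a $Y$-vertex, after first using $B4$ moves to slide out of the way any maximum lying directly above $v$; this is exactly the manoeuvre already used for the reverse direction of $R4$ (Case $2$) and for $R6$ in the proof of Lemma \ref{Reidemeister}. Doing the same for $\gamma'$, and then invoking Lemma \ref{intersects} together with vertical isotopies, I may assume that every vertex of both $\gamma$ and $\gamma'$ is a $Y$-vertex sitting in the standard local position in which its three incident edges each meet $S^2$ exactly once just above it.

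Next I would standardize (i). Along an edge $e$ the cut points cannot be slid past a crossing of $D$ lying on $e$ without destroying regularity or altering the over/under datum of $D$, so the cyclic order of crossings and cut points along $e$ is rigid; but a vertical $S1$ performed at a cut point inserts a cancelling maximum--minimum pair, hence two new cut points, at a prescribed spot (the deformation of Figure \ref{deform0}), whereas no available move deletes a cut point. I therefore fix, once and for all for the diagram $D$, a \emph{fully stabilized normal form} $\gamma_D$: every vertex a $Y$-vertex in standard position, and each edge--segment between consecutive crossings (or between a crossing and a vertex) carrying a prescribed number of $S1$-bumps, chosen larger than whatever $\gamma$ or $\gamma'$ exhibits there. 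Inserting the appropriate vertical $S1$ stabilizations into $\gamma$, and separately into $\gamma'$, and sliding the new bumps into place by vertical isotopy, I arrange that the cut--point pattern and the vertex data of each of $\gamma$, $\gamma'$ coincide with those of $\gamma_D$. By the first paragraph, $\gamma$ and $\gamma'$ then coincide (both equal $\gamma_D$ up to vertical isotopy), which proves the lemma.

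The main obstacle is the bookkeeping in the last two steps: one must verify that converting a $\lambda$-vertex to a $Y$-vertex by $S2$ and then inserting $S1$-bumps really produces, on each of $\gamma$ and $\gamma'$, the very same local model near every vertex, near every crossing, and along every edge--segment, rather than merely combinatorially similar ones. This is precisely where regularity does the work: because $p|_{\gamma_\pm}$ is injective, every such local model is a disjoint union of monotone arcs and monotone $Y$- or $\lambda$-trees with injective projection inside a ball, and any two such with the same projection and the same boundary on $S^2$ are vertically isotopic rel boundary, so combinatorial agreement upgrades to genuine equality. One must also check that the $B4$ moves used to clear maxima over vertices are legitimate moves for handlebody-knots in the sense of Figure \ref{B45}, which is clear since they only ever slide an edge over a maximum of another edge.
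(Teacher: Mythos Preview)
Your proposal is correct and follows essentially the same approach as the paper: use $S2$ (with $B4$) to reconcile the $\lambda/Y$ type of each vertex, use $S1$ to equalize the number of intersections with $S^2$ on each edge-segment between consecutive crossings/vertices, and then conclude by a vertical isotopy. The only cosmetic difference is that you normalize more aggressively (forcing every vertex to be a $Y$-vertex and fixing a single target $\gamma_D$), whereas the paper merely arranges corresponding vertices of $\gamma$ and $\gamma'$ to lie on the same side of $S^2$ and then matches them locally via $B4$.
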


\begin{proof}
By Lemma \ref{intersects}, we may assume that each edge of $\gamma$ and $\gamma'$ intersects the bridge sphere $S^2$.
Moreover, for simplicity, we may assume that any subarc between two crossings or between a crossing and a vertex of $\gamma$ and $\gamma'$ intersects $S^2$.
On each crossing, by vertical isotopies, $\gamma$ and $\gamma'$ coincide.
On each corresponding vertices $x$ and $x'$ , we may assume that both of $x$ and $x'$ lie in $B_+$ or $B_-$ by a stabilization $S2$.
Then by a $B4$ move, $\gamma$ and $\gamma'$ coincide around the vertices.
Finally, by stabilizations $S1$, we may assume that the intersection number of $S^2$ and each subarc between two crossings or between a crossing and a vertex of $\gamma$ and $\gamma'$ coincide.
Hence by a vertical isotopy, we have $\gamma=\gamma'$.
\end{proof}

\subsection{Relation among Morse positions}

\begin{proposition}[cf. \cite{D}, {\cite[Theorem 5.1 (1)]{II}}]\label{related}
Let $V$ be a handlebody-knot type and $v, v'\in V$ be two Morse positions.
Then two equivalence classes $[v]$ and $[v']$ are related by a finite sequence of stabilizations $S1$, $S2$ and their destabilizations and $M1$, $M2$, $M3$ moves.
\end{proposition}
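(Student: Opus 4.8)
The plan is to deduce Proposition \ref{related} from Theorem \ref{stable1} by showing that every Morse position can be pushed to a bridge position using only moves of the kinds listed in the statement, and then comparing the two resulting bridge positions with Theorem \ref{stable1}. By Proposition \ref{correspondence} we may argue with either handlebody-knots or their spines; I will use the handlebody-knot formulation. Recall that $v\in V$ is a bridge position, up to a Morse isotopy (a monotone rescaling of $h$ moves the dividing level to $0$ and does nothing to $[v]$), if and only if in $v$ every maximum and upper saddle of $h|_{\partial v}$ lies above every minimum and lower saddle; equivalently, $h|_{\partial v}$ has no \emph{inverted pair}, that is, no pair $(x,y)$ with $x$ a max-type critical point (maximum or upper saddle, the latter corresponding to a $\lambda$-vertex of the spine), $y$ a min-type critical point (minimum or lower saddle, i.e.\ a $Y$-vertex), and $h(x)<h(y)$.

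The central step is the following reduction: if $v\in V$ is a Morse position with at least one inverted pair, then a Morse isotopy followed by one of $M1$, $M2$, $M3$ (with a destabilization $S1$ or $S2$ available should the output fail to be a legitimate Morse position) produces a Morse position of $V$ with strictly fewer inverted pairs. Since the number of inverted pairs is unchanged under a Morse isotopy, iterating this joins $[v]$, by a finite sequence of the permitted moves, to the class of a bridge position $\bar v\in V$. To prove the reduction, choose an inverted pair $(x,y)$ for which the number of critical values of $h|_{\partial v}$ strictly between $h(x)$ and $h(y)$ is minimal; any intervening critical point would, according to its type, give an inverted pair with strictly fewer critical values between it, so $x$ and $y$ occupy consecutive critical levels. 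Hence in the slab $h^{-1}([h(x)-\epsilon,h(y)+\epsilon])$ the surface $\partial v$ is, apart from the local maximum/upper-saddle feature at $x$ and the local minimum/lower-saddle feature at $y$, a product; up to a Morse isotopy rearranging the remaining (spectator) sheets, the picture is then the local model of one of the moves $M1$, $M2$, $M3$, according to whether $x$ is a maximum or an upper saddle and $y$ a minimum or a lower saddle, the evident reflection symmetry reducing four cases to three. Applying the appropriate move exchanges $h(x)$ and $h(y)$: the pair $(x,y)$ becomes non-inverted, and since nothing lay between the two levels no new inverted pair is created, so the count strictly decreases.

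Carrying out this reduction for both $v$ and $v'$ yields bridge positions $\bar v,\bar v'\in V$, with $[v]$ joined to $[\bar v]$ and $[v']$ joined to $[\bar v']$ through sequences of $M1$, $M2$, $M3$ moves and (de)stabilizations. By Theorem \ref{stable1}, $\bar v$ and $\bar v'$ are related by a finite sequence of stabilizations $S1$, $S2$ and Morse isotopies, so $[\bar v]$ and $[\bar v']$ are related by a sequence of the permitted moves. Concatenating the three sequences — reading the moves in the reduction of $v'$ to $\bar v'$ backwards, which is legitimate since ``related'' refers to the equivalence relation generated by the moves, not to the reversibility of an individual move — connects $[v]$ to $[v']$ by a finite sequence of stabilizations $S1$, $S2$, their destabilizations, and $M1$, $M2$, $M3$ moves, as required.

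The main obstacle is the verification inside the reduction step: one must confirm that for every flavour of consecutive-level inverted pair the local model really does match $M1$, $M2$, or $M3$ after a Morse isotopy, and — the more delicate point — that performing the move keeps the remaining saddles of $h|_{\partial v}$ essential and the remaining vertices of the spine of type $Y$ or $\lambda$, so that the result is a genuine Morse position; a destabilization may be needed to restore these conditions, which is why the destabilizations appear among the permitted moves. Pinning the argument to a \emph{minimal} inverted pair, so that the two critical points are consecutive and their local features therefore unlinked, is exactly what keeps this bookkeeping tractable.
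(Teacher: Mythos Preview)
Your overall strategy matches the paper's: reduce each Morse position to a bridge position, then invoke Theorem \ref{stable1}. The paper's proof is terse—it simply asserts that applying $S3$, $M1$, $M2$, $M3$ converts any Morse position to a bridge position, and then observes (Figure \ref{S33}) that $S3$ itself factors as $S2$, $M2$, $B5$ (a Morse isotopy for handlebody-knots), $S2^{-1}$, so $S3$ already lies in the span of the moves listed in the proposition. You instead give a more explicit inverted-pair complexity argument and claim that $M1$, $M2$, $M3$ alone dispatch every consecutive inverted pair.

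That last claim is where you are glossing. Your three-case split, indexed only by the types of $x$ and $y$, does not see the subcase in which $x$ is an upper saddle ($\lambda$-vertex), $y$ is a lower saddle ($Y$-vertex), and the single upward strand from $x$ \emph{coincides} with the single downward strand from $y$—this is possible precisely because both features have a monotone edge entering the slab between their levels. In that configuration the two features cannot simply slide past one another while keeping the edge monotone and the vertices of type $\lambda$/$Y$; the local combinatorics must change, and this is exactly where the paper routes through $S3$ and its decomposition. Your hedge about destabilizations ``should the output fail to be a legitimate Morse position'' targets a different worry (essentiality of saddles after a move) and does not cover this. The argument is correct in shape, but to close it you must either check that the local models in Figure \ref{M123} already absorb the connected $\lambda$--$Y$ subcase, or handle it via $S3$ and its factorization as the paper does.
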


\begin{proof}
First by applying $S3$, $M1$, $M2$, $M3$ to $v, v'$, we obtain two bridge positions $v, v'$.
We remark that a stabilization $S3$ is obtained by a sequence of $S2$, $M2$, $B5$ (Morse isotopy) and a destabilizaiton of $S2$.
See Figure \ref{S33}.

\begin{figure}[htbp]
	\begin{center}
	\includegraphics[width=0.6\textwidth,pagebox=cropbox,clip]{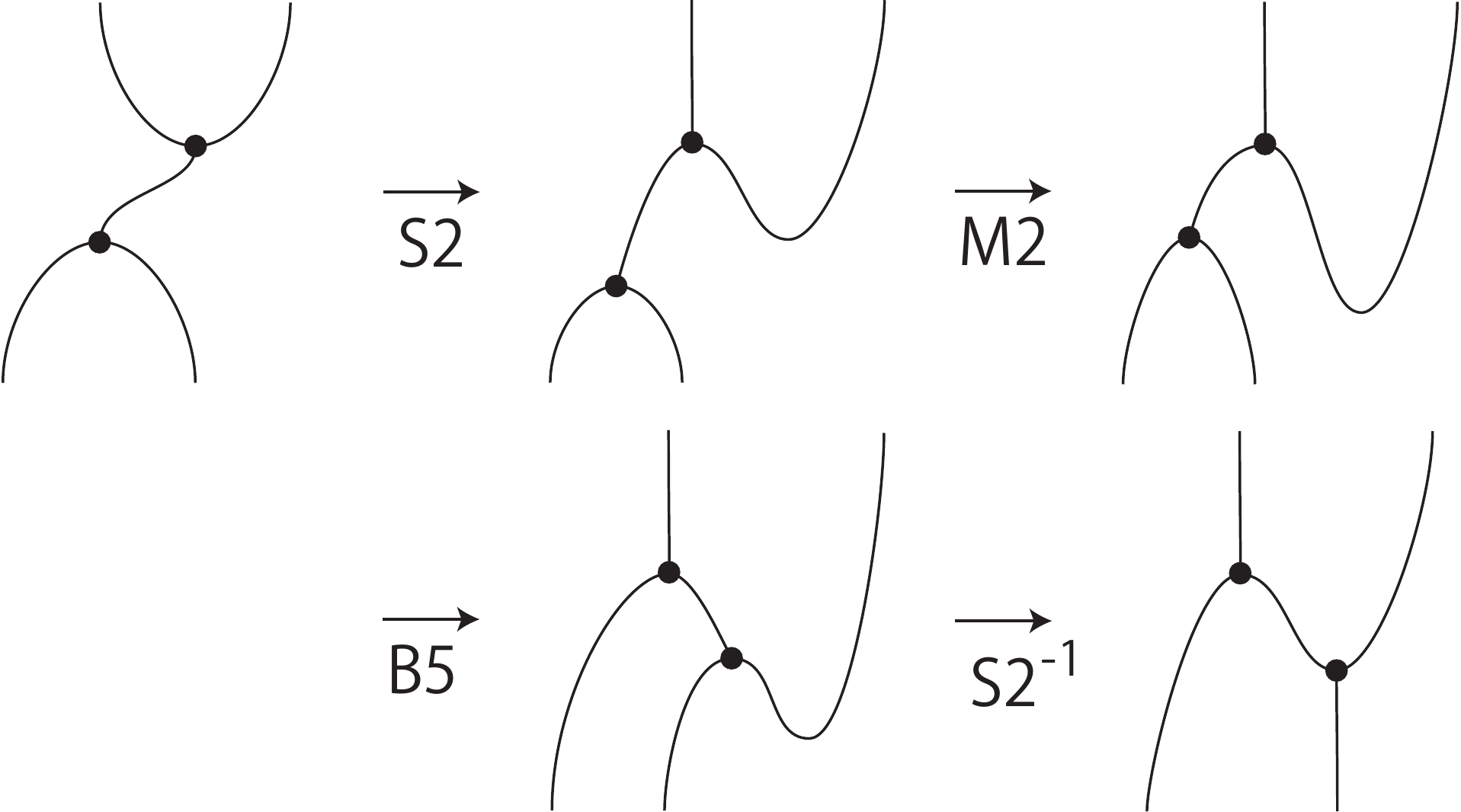}
	\end{center}
	\caption{A stabilization $S3$ is obtained by $S2$, $M2$, $B5$, $S2^{-1}$}
	\label{S33}
\end{figure}

Then by Theorem \ref{stable1}, two bridge positions $v, v'$ are related by $S1$, $S2$ and their destabilizations.
\end{proof}

\bibliographystyle{amsplain}

\bigskip
\noindent{\bf Acknowledgements.}
The author would like to thank Kazuto Takao and Atsushi Ishii for useful comments.

\end{document}